\title{City products of right-angled buildings and their universal groups}
\author[J. Bossaert]{Jens Bossaert}
\author[T. De Medts]{Tom De Medts}
\email[Tom De Medts (corresponding author)]{tom.demedts@ugent.be}
\address[Address for both authors]{Ghent University \\ Department of Mathematics: Algebra and Geometry \\ Krijgslaan~281, S25, 9000 Ghent, Belgium}
\date{\today}
	\tikzset{
    	myvertex/.style = {circle, draw, fill, thick, outer sep=3\pgflinewidth, inner sep=0pt, minimum size=3.5pt}, 
    	mydoublevertex/.style = {myvertex, double, double distance=\pgflinewidth, minimum size=4pt-2\pgflinewidth, outer sep=4\pgflinewidth},
    	myedge/.style = {draw=black, thick, line cap=round, line join=round},
    	every label/.style = {font=\strut, label distance=0pt, outer sep=0pt, inner sep=0pt},
    	baseline = {([yshift=-.57ex]current bounding box.center)},
    	x=12mm, y=12mm,
    }
    		\let\on=\pgfdecorationsegmentlength%
    		\let\off=\pgfdecorationsegmentamplitude%
\let\rest=\pgfmathresult%
\let\onoff=\pgfmathresult%
\let\nfullonoff=\pgfmathresult%
\global\let\offexpand=\pgfmathresult%
    \definecolor{ugentblue}{RGB}{30,100,200}
    \definecolor{ugentyellow}{RGB}{255,210,0}
    \definecolor{ugentsecondary}{RGB}{190,81,144}
    \definecolor{ugentred}{RGB}{220,78,40}
    \definecolor{ugentgreen}{RGB}{113,168,96}
    \definecolor{mylightgray}{gray}{.94}
    \definecolor{mydarkgray}{gray}{.86}
    \tikzset{cheating dash/.code args={on #1 off #2}{
    	\tikzset{decoration={cheating dash, segment length=#1, amplitude=#2}, decorate}%
    	\csname tikz@addoption\endcsname{\pgfsetdash{{#1}{\offexpand}}{0pt}}}
    }
    \tikzset{
    	dotted/.style = {very thick, cheating dash = on 2000sp off 3.6pt},
    	dashed/.style = {cheating dash = on 3pt off 3pt},
    	-dots/.style = {postaction = {decoration = {markings, mark=at position 1 with {\node[anchor=180, transform shape, outer sep=2pt, inner sep=0pt] {\kern45000sp\textellipsis};}}, decorate}}
    }
    			\def\sourcecoordinate{\pgfpointanchor{\tikztostart}{center}}
    			\def\targetcoordinate{\pgfpointanchor{\tikztotarget}{center}}
    			\pgfmathanglebetweenpoints{\sourcecoordinate}{\targetcoordinate}
    			\edef\tempangle{\pgfmathresult}
\newcommand*\rel@kern[1]{\kern#1\dimexpr\macc@kerna}
\newcommand*\widebar[1]{%
  \begingroup
  \def\mathaccent##1##2{%
    \rel@kern{0.8}%
    \overline{\rel@kern{-0.8}\macc@nucleus\rel@kern{0.2}}%
    \rel@kern{-0.2}%
  }%
  \macc@depth\@ne
  \let\math@bgroup\@empty \let\math@egroup\macc@set@skewchar
  \mathsurround\z@ \frozen@everymath{\mathgroup\macc@group\relax}%
  \macc@set@skewchar\relax
  \let\mathaccentV\macc@nested@a
  \macc@nested@a\relax111{#1}%
  \endgroup
}
    \newcommand{\itemref}[2]{\cref{#1}\ref{#1:#2}}
\pgfplotsset{compat=1.15}
\renewcommand\footnotemark{}
\newtheorem{theorem}{Theorem}[section]
\newtheorem{lemma}[theorem]{Lemma}
\newtheorem{proposition}[theorem]{Proposition}
\newtheorem{corollary}[theorem]{Corollary}
\newtheoremstyle{step}{\topsep}{\topsep}{\itshape}{0pt}{\itshape}{\@. }{0pt}{\thmname{#1}\thmnumber{ #2}\thmnote{ (#3)}}
\theoremstyle{step}
\theoremstyle{definition}
\newtheorem{definition}[theorem]{Definition}
\newtheorem{remark}[theorem]{Remark}
\newtheorem{example}[theorem]{Example}
\newtheorem{examples}[theorem]{Examples}
\DeclareMathOperator{\Sym}{Sym}
\DeclareMathOperator{\Aut}{Aut}
\DeclareMathOperator{\U}{\mkern2mu\mathcal{U}}
\newcommand{\@Fhack}{\@ifnextchar,{\mkern-2mu}{}}
\newcommand{\F}{\ensuremath\boldsymbol{F}\@Fhack}
\newcommand{\bL}{\ensuremath\boldsymbol{L}\@Fhack}
\newcommand{\Facute}{\ensuremath\boldsymbol{\smash{\acute F}}\@Fhack}
\newcommand{\Fwidehat}{\ensuremath\boldsymbol{\widehat F}\@Fhack}
\renewcommand{\P}{\ensuremath\mathcal{P}}
\newcommand{\R}{\ensuremath\mathcal{R}}
\newcommand{\cityprod}{\ensuremath\operatorname{\rotatebox[x=.5ex,y=.3ex]{90}{$\bowtie$}}}
\DeclareMathOperator{\Res}{Res}
\DeclareMathOperator{\dist}{dist}
\newcommand\restrict[3][]{\mathchoice{{#2\bigr\rvert_{#3}^{#1}}}{{#2\rvert_{#3}^{#1}}}{{#2\rvert_{#3}^{#1}}}{{#2\rvert_{#3}^{#1}}}}
\newcommand\acts{\mkern 2mu\relax.\mkern 2mu\relax}
\subjclass[2020]{51E24, 22F50, 22D05, 20E08, 20F65}
\keywords{right-angled buildings, universal groups, locally compact groups, city products}
\begin{document}

\begin{abstract}
    We introduce the notion of \emph{city products} of right-angled buildings that produces a new right-angled building out of smaller ones.
    More precisely, if $M$ is a right-angled Coxeter diagram of rank $n$ and $\Delta_1,\dots,\Delta_n$ are right-angled buildings, then we construct a new right-angled building $\Delta := \cityprod_M(\Delta_1,\dots,\Delta_n)$. We can recover the buildings $\Delta_1,\dots,\Delta_n$ as residues of $\Delta$, but we can also construct a \emph{skeletal building} of type $M$ from $\Delta$ that captures the large-scale geometry of $\Delta$.
    
    We then proceed to study \emph{universal groups} for city products of right-angled buildings, and we show that the universal group of $\Delta$ can be expressed in terms of the universal groups for the buildings $\Delta_1,\dots,\Delta_n$ and the structure of $M$. As an application, we show the existence of many examples of pairs of different buildings of the same type that admit (topologically) isomorphic universal groups, thereby vastly generalizing a recent example by Lara Be{\ss}mann.
\end{abstract}

\dedicatory{In memory of Jacques Tits, architect of buildings}

\maketitle


\section{Introduction}

A building is called \emph{right-angled} if its Coxeter group is right-angled, which means that the only values occurring in its Coxeter matrix are $1$, $2$ and $\infty$.
The prototypical example is the case where the Coxeter matrix has rank $2$ with a label $\infty$, in which case the building is a tree. In general, the behavior of right-angled buildings is somewhat comparable to that of trees, but in a combinatorially much more complicated (and therefore much more interesting) way.

The first systematic study of right-angled buildings is by Fr\'ed\'eric Haglund and Fr\'ed\'eric Paulin \cite{haglundpaulin}, who showed the existence and uniqueness of right-angled buildings for any set of parameters (see \cref{thm:rabsexist} below).
Later, right-angled buildings have been used to construct interesting examples of \emph{lattices}, as in the work of Angela Kubena, Anne Thomas and Kevin Wortman \cite{thomas1,thomas3,thomas2}.

Our motivation for studying right-angled buildings, initiated by Pierre-Emmanuel Caprace in \cite{caprace2014}, is the connection with totally disconnected locally compact groups. More precisely, the automorphism group of a right-angled building is always totally disconnected with respect to the permutation topology, and if the building is locally finite, then the automorphism group is also locally compact. This is not true in general, but these automorphism groups contain lots of interesting subgroups, namely so-called \emph{universal groups}, that can still be locally compact even if the building is not locally finite.

These universal groups were first introduced and studied for trees by Marc Burger and Shahar Mozes in their seminal paper \cite{burgermozes}. 
This concept has been generalized to right-angled buildings by the second author in joint work with Ana C. Silva and Koen Struyve in \cite{silva1,silva2} in the locally finite case, and has been further generalized and studied without this assumption in our paper \cite{bossaert}, focussing on topological properties.

\medskip

For some right-angled buildings, the large-scale geometry looks like a tree; see, for instance, \cref{fig:coxeterrab} below. This raises the question whether it is possible, in these cases, to somehow reverse the process, i.e., whether we can start from a tree and obtain a more complicated right-angled building by ``inserting'' more complicated blocks at each vertex of the tree.

This idea gave rise to the construction that we introduce and study in this paper. We call it the \emph{city product} of buildings, as it is a way to construct larger objects out of a given number of buildings, guided by the rough structure of yet another right-angled diagram.
More precisely, if $M$ is a right-angled Coxeter diagram of rank $n$ and $\Delta_1,\dots,\Delta_n$ are right-angled buildings, then we construct a new right-angled building $\Delta := \cityprod_M(\Delta_1,\dots,\Delta_n)$. We can recover the buildings $\Delta_1,\dots,\Delta_n$ as residues of $\Delta$, but we can also construct a \emph{skeletal building} $\Phi$ of type $M$ from $\Delta$ that captures the large-scale geometry of $\Delta$.
Constructing this building $\Phi$ is not difficult, but it turns out to be far from trivial to show that it is indeed a building. This is the content of \cref{prop:cityproductskeletalstuff}, which relies on the new notions of \emph{weak homotopies} and \emph{parkour maps} that we have introduced for this purpose.

\medskip

It turns out that the universal groups for these city products can be described as the universal group of this skeletal building $\Phi$ with respect to universal groups for each of the smaller buildings $\Delta_1,\dots,\Delta_n$; this is the content of \cref{thm:universalcityproduct}.

\medskip

In a recent preprint \cite{bessmann}, Lara Be{\ss}mann has shown the existence of pairs of different right-angled buildings, both of type
\raisebox{.8ex}{%
	\begin{tikzpicture}[scale=.6]
	   \useasboundingbox (-.2,.1) rectangle (2.2,.4);
		\path (0,0) node[myvertex] (A) {}
			++(1,0) node[myvertex] (B) {}
			++(1,0) node[myvertex] (C) {};
		\draw[myedge] (A) -- node[above] {\footnotesize $\infty$} (B) -- node[above] {\footnotesize $\infty$} (C);
	\end{tikzpicture}},
admitting universal groups that are topologically isomorphic.
Her method relies on the notion of tree-wall trees from \cite{silva1} and only works for star-shaped diagrams (see \itemref{ex:isom}{1}).
We show that this can be interpreted in terms of city products, which allows us to produce many more examples of such pairs. This is the content of \cref{thm:application}.

\subsection*{Acknowledgment.}

The first author has been supported by the UGent BOF PhD mandate BOF17/DOC/274.
We thank an anonymous referee for carefully reading the paper and suggesting several improvements in the exposition.



\section{Preliminaries}

\subsection{Coxeter systems}

\begin{definition}\label{def:cox}
    \begin{enumerate}
        \item 
        	Let $I$ be any index set and $M$ a function
        	\[M\colon I\times I\to \mathbb N\cup\{\infty\}\colon (i,j)\mapsto m_{ij}\]
        	satisfying $m_{ii}=1$, $m_{ij}\geq 2$, and $m_{ij}=m_{ji}$ for all $i\neq j\in I$. Then the \emph{Coxeter group of type $M$} is the group defined by the presentation
        	\[W = \bigl\langle s_i \mathrel{\bigm|} \text{$(s_i s_j)^{m_{ij}} = 1$ for all $i,j\in I$}\bigr\rangle.\]
        	When $m_{ij}=\infty$, this means that no relation on $s_is_j$ should be imposed. Note that the assumption that $m_{ii}=1$ for all $i\in I$ immediately implies that the generators $s_i$ are involutions. Additionally, note that when $m_{ij}=2$, the generators $s_i$ and $s_j$ commute.
        	
        	Together with the generating set $S=\{s_i\mid i\in I\}$, the pair $(W,S)$ is called the \emph{Coxeter system of type $M$}. The \emph{rank} of $(W,S)$ is the cardinality of $I$.
        	
        	We can represent $M$ by means of its \emph{Coxeter matrix $(m_{ij})$}, or more commonly its \emph{Coxeter diagram}: the nodes of the diagram are the elements of $I$ (sometimes with explicit labels), and two nodes are connected by a decorated edge according to the following rules:
        	\[\newcommand\coxeterdiagram[2]{\begin{tikzpicture}[x=6mm]
        			\node[myvertex,label=above:$i$] (A) at (-1,0) {};
        			\node[myvertex,label=above:$j$] (B) at (1,0) {};
        			\path[myedge] #2;
        			\node[yshift=-6mm] {#1};
        			\end{tikzpicture}}
        		\coxeterdiagram{$m_{ij}=2$}{}
        			\qquad\qquad\coxeterdiagram{$m_{ij}=3$}{(A) -- (B)}
        			\qquad\qquad\coxeterdiagram{$m_{ij}=4$}{(A.15) -- (B.165) (A.345) -- (B.195)}
        			\qquad\qquad\coxeterdiagram{$m_{ij}\geq 5$}{(A) -- node[above] {\smash{\scriptsize $m_{ij}$}} (B)}\]
        \item
        	We call a Coxeter system $(W,S)$ \emph{irreducible} if the underlying graph of its Coxeter diagram is connected, and \emph{reducible} otherwise.
        \item\label{def:cox:RA}
        	We call a Coxeter system $(W,S)$ \emph{right-angled} if $m_{ij}\in\{2,\infty\}$ for all $i\neq j$.
    \end{enumerate}
\end{definition}

In general, non-isomorphic Coxeter systems may have isomorphic Coxeter groups, but this cannot occur for right-angled Coxeter systems:
\begin{theorem}
	\label{thm:racgrigid}
	If a right-angled Coxeter group $W$ admits two Coxeter systems $(W,S)$ and $(W,S')$, then these Coxeter systems are isomorphic (i.e.~there is a diagram-preserving bijection $S\to S'$).
\end{theorem}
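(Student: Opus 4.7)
The plan is to recover the Coxeter diagram $M$ directly from the isomorphism type of $W$ as an abstract group; once this is done, a diagram-preserving bijection between any two right-angled Coxeter systems $(W,S)$ and $(W,S')$ follows tautologically. A right-angled Coxeter group is precisely the graph product of copies of $\mathbb{Z}/2\mathbb{Z}$ indexed by the vertices of the commutation graph of $M$ (with an edge between $s,t$ iff $m_{st}=2$), so the task splits cleanly into reconstructing the vertex set and then the edge set of this graph intrinsically.

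For the vertices, I would first observe that in the right-angled case there are no odd relations, so the abelianisation is $W^{\mathrm{ab}} \cong \bigoplus_{s\in S} \mathbb{Z}/2$ and the generators map to distinct basis vectors; in particular, distinct elements of $S$ are never conjugate in $W$. Let $R := \bigcup_{w\in W} wSw^{-1}$ be the set of reflections. I would try to characterise $R$ group-theoretically, e.g.\ as the set of involutions $r \in W$ whose image in $W^{\mathrm{ab}}$ is a single basis vector and whose centraliser has the form $\langle r\rangle \times W_J$ for a suitable parabolic subgroup $W_J$. The conjugacy classes of $R$ would then be in canonical bijection with $S$. For the edges, I would note that for reflections $s' \in [s]$, $t' \in [t]$ with $[s] \ne [t]$, one has $m_{st} = 2$ iff some pair of representatives satisfies $s' t' = t' s'$ (equivalently $\langle s',t'\rangle \cong (\mathbb{Z}/2)^2$), while $m_{st} = \infty$ iff every pair generates an infinite dihedral subgroup. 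Both criteria depend only on $W$ together with its partition into reflection classes, so the resulting diagram is an invariant of $W$.

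The main obstacle I foresee is the intrinsic characterisation of the reflection set $R$ in the first step: $W$ typically contains many involutions that are not reflections (products of commuting reflections, for instance), and ruling these out from the proposed characterisation is delicate. A clean way to do this is geometric, via the Davis complex, where a reflection is exactly an involution whose fixed-point set is a wall; alternatively one appeals to Tits' analysis of reduced expressions to show that every involution is conjugate to the longest element of a (unique) spherical parabolic subgroup, and the reflections are those for which this parabolic has rank one. In either case, once the reflections are pinned down, the remainder of the argument is essentially formal, and one recovers the full strong rigidity statement (a theorem going back to Radcliffe, M\"uhlherr, and Castella).
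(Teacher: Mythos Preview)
The paper does not prove this theorem; it simply cites \cite{radcliffe} and \cite{hosaka}. Your proposal is therefore not comparable to anything in the paper itself---you are attempting what the authors outsourced to the literature.

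Your overall strategy, recovering the diagram from the abstract group $W$, is the right one and is the shape of the arguments in the cited references. You also correctly locate the crux: isolating the reflections among all involutions. However, your suggested resolutions of that crux are circular as stated. An involution ``mapping to a single basis vector in $W^{\mathrm{ab}}\cong(\mathbb{Z}/2)^{|S|}$'' is meaningless without a distinguished basis, and the only natural one comes from $S$; every nonzero element of $(\mathbb{Z}/2)^n$ lies in \emph{some} basis. The Davis complex and the phrase ``spherical parabolic of rank one'' are likewise built from a chosen system $(W,S)$, so invoking them already presupposes the invariance you want to establish. The way out, roughly as in the cited papers, is to use genuinely intrinsic invariants of involutions---for instance the structure of centralisers, or the poset of conjugacy classes of finite subgroups together with their orders---but extracting the commutation graph from these data takes real work that your sketch does not supply. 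Your edge criterion (some conjugates of $s$ and $t$ commute iff $m_{st}=2$) is correct once the reflection classes are pinned down, though its justification also rests on the classification of finite subgroups of $W$, which you have not invoked.
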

\begin{proof}
	We refer to \cite{radcliffe} or \cite{hosaka}.
\end{proof}

\begin{definition}
	\label{def:evaluationmorphism}
	Let $(W,S)$ be a Coxeter system over some index set $I$.
	\begin{enumerate}
	    \item 
    		We will write $I^*$ for the free monoid over $I$. The elements of $I^*$ will be called \emph{words}.
	    \item 
        	There is a natural surjective \emph{evaluation morphism} of monoids 
	\end{enumerate}
	\[\epsilon\colon I^* \to W\colon i\mapsto s_i.\]
\end{definition}
\begin{definition}
	For every $i\neq j$ such that $m_{ij}$ is finite, define in $I^*$ the word
	\[p(i,j) = \begin{cases}
	 	(ij)^k & \text{if $m_{ij}=2k$ is even,}\\
	 	j(ij)^k & \text{if $m_{ij}=2k+1$ is odd.}
	\end{cases}\]
	In other words, $p(i,j)$ is the word with $m_{ij}$ alternating letters $i$ and $j$, ending in $j$. When $m_{ij}=\infty$, $p(i,j)$ remains undefined.
\end{definition}

\begin{definition}
	\label{def:homotopy}
	Let $i,j\in I$ and $w_1,w_2\in I^*$.
	\begin{enumerate}
		\item An \emph{elementary homotopy} (or also a \emph{braid relation}) is a transformation of a word $w_1\,p(i,j)\,w_2$ into the word $w_1\,p(j,i)\,w_2$.
		\item Two words $w$ and $w'$ are \emph{homotopic} if $w$ can be transformed into $w'$ by a sequence of elementary homotopies; we denote this by $w\simeq w'$. Clearly, homotopy is an equivalence relation and preserves the length of the words.
		\item An \emph{elementary contraction} is a transformation of a word $w_1\,ii\,w_2$ into the word $w_1\,w_2$.
		\item An \emph{elementary expansion} is a transformation of a word $w_1\,w_2$ into a word $w_1\,ii\,w_2$.
    	\item A word is called \emph{reduced} if it is not homotopic to a word of the form $w_1\,ii\,w_2$ (for some $i\in I$).
    	\item Two words $w$ and $w'$ are called \emph{equivalent} if $w$ can be transformed into $w'$ by a sequence of elementary homotopies, contractions, and expansions. Clearly, every equivalence class contains some reduced word.
	\end{enumerate}
\end{definition}

\begin{theorem}
	\label{thm:homotopicstuff}
	\begin{enumerate}
		\item\label{thm:homotopicstuff:1} Two words $w$ and $w'$ are equivalent if and only if $\epsilon(w)=\epsilon(w')$.
		\item\label{thm:homotopicstuff:2} Two reduced words $w$ and $w'$ are equivalent if and only if they are homotopic.
		\item\label{thm:homotopicstuff:3} Let $w$ be a reduced word and let $i\in I$. If $iw$ (or $wi$) is not reduced, then $w$ is homotopic to a word that begins (or ends, respectively) with $i$.
	\end{enumerate}
\end{theorem}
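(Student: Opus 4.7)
The plan is to reduce everything to the Exchange Condition (part (iii)) and then deduce (ii) and (i) by combinatorial manipulation. The easy direction of (i) is immediate: an elementary homotopy applies the relation $(s_is_j)^{m_{ij}}=1$, under which $\epsilon(p(i,j))=\epsilon(p(j,i))$, while contractions and expansions use the involution relation $s_i^2=1$. Hence equivalent words evaluate to the same element of $W$.

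The real content is part (iii). I would establish it through the standard geometric representation of $(W,S)$. Concretely, let $V = \bigoplus_{i\in I} \mathbb{R}\alpha_i$, equipped with the symmetric bilinear form $B(\alpha_i,\alpha_j) = -\cos(\pi/m_{ij})$, with the convention $\cos(\pi/\infty):=1$ so that $B(\alpha_i,\alpha_j)=-1$ when $m_{ij}=\infty$. Define $s_i(v) = v - 2B(\alpha_i,v)\alpha_i$; this yields a faithful representation of $W$ on $V$, and the set of roots $\Phi = W\cdot\{\alpha_i\mid i\in I\}$ splits into positive and negative roots. One then proves that $\ell\bigl(\epsilon(w)\bigr)$ equals the number of positive roots sent negative by $\epsilon(w)^{-1}$, that a word $w$ is reduced precisely when $\ell(\epsilon(w)) = |w|$, and that $\ell(s_i\epsilon(w)) < \ell(\epsilon(w))$ if and only if $w$ is homotopic to a word beginning with $i$. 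The last of these equivalences is exactly (iii); the analogous statement for $wi$ follows by reversing words.

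With (iii) in hand, (ii) is an induction on the common length $n$ of reduced words $w,w'$ with $\epsilon(w)=\epsilon(w')$. Writing $w = iw_1$ with $w_1$ reduced of length $n-1$, the assumption forces $iw'$ to be non-reduced, so by (iii) some homotope of $w'$ begins with $i$, say $iw_2$ with $w_2$ reduced. Then $\epsilon(w_1)=\epsilon(w_2)$, so by induction $w_1\simeq w_2$, whence $w=iw_1 \simeq iw_2 \simeq w'$. Finally, for the remaining direction of (i), given arbitrary $w,w'$ with $\epsilon(w)=\epsilon(w')$, iteratively apply homotopies followed by contractions (the definition of non-reduced guarantees this is always possible) to produce reduced words $\widebar{w}\simeq\cdots\sim w$ and $\widebar{w'}\simeq\cdots\sim w'$ with the same evaluation; then $\widebar{w}\simeq \widebar{w'}$ by (ii), chaining the three equivalences.

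The principal obstacle is (iii), which genuinely needs an external invariant. No purely combinatorial manipulation of words can produce it, because the statement is equivalent to the faithfulness of the length function on reduced expressions, which is precisely what the geometric representation delivers. Once (iii) is available the remaining deductions are entirely formal.
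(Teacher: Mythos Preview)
Your outline is correct and in fact supplies considerably more than the paper does: the paper's proof consists of the one-line remark that the elementary moves preserve~$\epsilon$ (giving one direction of (i)), the observation that homotopies, contractions and expansions generate exactly the congruence on $I^*$ defining $W$ (giving the other direction of (i) ``immediately''), and then simply cites Ronan's textbook for (ii) and (iii). Your route through the geometric representation and the root system is one of the standard proofs of the results being cited, so there is no conflict.

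Two points worth tightening. First, the paper's definition of \emph{reduced} is ``not homotopic to a word containing a square'' rather than ``of length $\ell(\epsilon(w))$''. You assert these coincide, and they do, but that equivalence is itself part of the content and should be isolated as a lemma rather than folded into the list of things ``one then proves''. Second, what the root-system machinery yields directly is the Exchange Condition: if $\ell(s_i v)<\ell(v)$ and $v = s_{j_1}\cdots s_{j_n}$ with $n=\ell(v)$, then some $s_{j_k}$ can be deleted. This gives \emph{some} reduced expression for $\epsilon(w)$ beginning with $i$, but not immediately that the \emph{given} word $w$ is homotopic to it. Closing that gap requires a separate induction on $|w|$ exploiting the dihedral subgroup $\langle s_i,s_j\rangle$ when $w$ begins with $j\neq i$. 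Once that induction is in place your derivation (iii)~$\Rightarrow$~(ii)~$\Rightarrow$~(i) goes through exactly as written. Alternatively one can run the induction to obtain Matsumoto's theorem (your (ii)) first and read off (iii) as a corollary; this is Ronan's order, hence implicitly the paper's, since Corollary~2.13 there is deduced from Theorem~2.11 rather than the reverse.
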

\begin{proof}
	By the defining relations $(s_is_j)^{m_{ij}} = 1$ in the presentation, $p(i,j)$ and $p(j,i)$ have the same image under $\epsilon$, and $\epsilon(ii)$ is the identity. Statement \ref{thm:homotopicstuff:1} follows immediately. For \ref{thm:homotopicstuff:2}, we refer to \cite[Theorem 2.11]{ronan}. Statement \ref{thm:homotopicstuff:3} is \cite[Corollary 2.13]{ronan}.
\end{proof}

\subsection{Chamber systems}

Our approach is based on \cite{ronan}.

\begin{definition}
	Let $I$ be any index set. A \emph{chamber system over $I$} is a set $\Delta$ together with, for every $i\in I$, an equivalence relation called \emph{$i$-adjacency}. The elements of $\Delta$ are called \emph{chambers}. If two chambers $c$ and $d$ are $i$-adjacent, we write $c\sim_i d$, or simply $c\sim d$ if we do not want to stress the adjacency type. The cardinality $|I|$ is called the \emph{rank} of $\Delta$. In this paper, the rank will always be finite.
	
	We will usually say that ``$\Delta$ is a chamber system'' when the equivalence relations on~$\Delta$ are clear from the context.
\end{definition}

\begin{definition}
    Let $\Delta$ be a chamber system over $I$.
	A \emph{gallery $\gamma$} in $\Delta$ is a finite sequence of pairwise adjacent chambers
	\[c_0 \sim_{i_1} c_1 \sim_{i_2} {\cdots} \sim_{i_n} c_n\]
	for certain $i_1,\dots,i_n\in I$. We call the word $i_1\mathbin{\cdots} i_n \in I^*$ the \emph{type} of~$\gamma$, and the integer $n$ the \emph{length} of $\gamma$. If there is no strictly shorter gallery from $c_0$ to $c_n$, then we call $\gamma$ a \emph{minimal} gallery.
	
	Chamber systems come equipped with a natural metric
	\[\dist\colon \Delta\times\Delta\to\mathbb N\cup\{\infty\}\]
	defined by declaring $\dist(c,d)$ to be the minimal length of all galleries joining $c$~and~$d$ (or $\infty$ if there is no such gallery). It is clear that this distance function is positive-definite, symmetric, and satisfies the triangle inequality.
\end{definition}

\begin{definition}
	Let $J\subseteq I$. A subset $C\subseteq\Delta$ is called \emph{$J$-connected} if any two chambers in $C$ can be joined by a gallery of type in $J^*$. A \emph{residue of type $J$}, or simply a \emph{$J$-residue}, is a $J$-connected component of $\Delta$. A \emph{panel of type $j$}, or simply a \emph{$j$-panel}, is a residue of type $\{j\}$. The set of all $J$-residues of the chamber system $\Delta$ will be denoted by $\Res_J(\Delta)$.
	
	Note that each $J$-residue is, in its own right, a connected chamber system over the index set $J$.
\end{definition}

\begin{definition}
	A chamber system is called \emph{thin} if every panel contains exactly two chambers, and \emph{thick} if every panel contains at least three chambers. (Panels containing only a single chamber are degenerate cases that should not occur in any reasonable application.)
\end{definition}

Note that a chamber system might be neither thin nor thick.

\begin{definition}
	A map $\varphi\colon\Delta_1\to\Delta_2$ between two chamber systems is a \emph{morphism} if $\varphi(c) \sim \varphi(d)$ in $\Delta_2$ whenever $c \sim d$ in $\Delta_1$. As usual, an \emph{isomorphism} is a bijective morphism, and an \emph{automorphism} is an isomorphism to the same chamber system. Assuming that $\Delta_1$ and $\Delta_2$ have the same index set, a morphism is \emph{type-preserving} if $\varphi(c) \sim_i \varphi(d)$ whenever $c \sim_i d$. In this paper, we shall always assume morphisms to be type-preserving.
	 
	 The set of all automorphisms of a chamber system $\Delta$ forms a group, denoted by $\Aut(\Delta)$.
\end{definition}

\begin{definition}
	Let $(W,S)$ be a Coxeter system of type $M$ over $I$. Define a chamber system over $I$ with the elements of $W$ as chambers, and declare two group elements $v$ and $w$ to be $i$-adjacent if and only if $vs_i = w$. 
	The resulting chamber system is called the \emph{Coxeter complex of type $M$}.
	Coxeter complexes are always connected and thin: every chamber is $i$-adjacent to exactly one other chamber for every $i\in I$.

    Observe that the Coxeter complex associated to a Coxeter system $(W,S)$ is nothing more than the (undirected) Cayley graph of $W$ with respect to the generating set $S$.
\end{definition}
\begin{theorem}\label{thm:coxcpx}
    A gallery in a Coxeter complex is minimal if and only if its type is reduced.
\end{theorem}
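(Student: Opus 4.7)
The plan is to translate the statement about minimality of galleries into a statement about word lengths in the Coxeter group, and then invoke \cref{thm:homotopicstuff}.

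First I would observe that by the definition of $i$-adjacency in the Coxeter complex, a gallery $c_0 \sim_{i_1} c_1 \sim_{i_2} \cdots \sim_{i_n} c_n$ satisfies $c_k = c_0 \cdot s_{i_1}s_{i_2}\cdots s_{i_k}$ for every $0 \le k \le n$. In particular, if $w = i_1 i_2 \cdots i_n \in I^*$ is the type of the gallery, then $c_n = c_0 \cdot \epsilon(w)$, and conversely any word $w \in I^*$ with $c_n = c_0 \cdot \epsilon(w)$ is the type of a (unique) gallery of length $|w|$ starting at $c_0$. Hence
\[\dist(c_0, c_n) = \min\bigl\{|w| \,\bigm|\, w \in I^*,\; \epsilon(w) = c_0^{-1}c_n\bigr\},\]
and a gallery of type $w$ is minimal if and only if $|w|$ equals this minimum, i.e.\ no word of strictly smaller length has the same image under $\epsilon$.

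It therefore suffices to prove that a word $w \in I^*$ is reduced (in the sense of \cref{def:homotopy}) if and only if $|w| \le |w'|$ for every $w' \in I^*$ with $\epsilon(w') = \epsilon(w)$. For the forward direction, suppose $w$ is reduced and let $w'$ be any word with $\epsilon(w) = \epsilon(w')$. By \itemref{thm:homotopicstuff}{1}, $w$ and $w'$ are equivalent, so $w'$ reduces via a sequence of contractions to some reduced word $w''$ with $|w''| \le |w'|$ and $w'' \simeq w$ by \itemref{thm:homotopicstuff}{2}. Since elementary homotopies preserve length, $|w| = |w''| \le |w'|$. For the backward direction, suppose $w$ is not reduced. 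Then $w \simeq w_1\,ii\,w_2$ for some $i \in I$ and some $w_1, w_2 \in I^*$; the word $w' := w_1 w_2$ is equivalent to $w$ (one elementary contraction) but strictly shorter, so $w$ is not length-minimal in its equivalence class.

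The argument is mostly bookkeeping; the only non-trivial input is \itemref{thm:homotopicstuff}{2}, which guarantees that any two reduced words with the same image in $W$ are homotopic and hence have equal length. The main obstacle, if any, is keeping straight the distinction between \emph{homotopy} (length-preserving) and \emph{equivalence} (which may shorten words via contractions); \cref{thm:homotopicstuff} is precisely what bridges these two notions and makes the argument go through cleanly.
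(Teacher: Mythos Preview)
Your argument is correct. The paper itself does not give a proof of this statement; it simply refers to Ronan's book. In fact, \itemref{thm:homotopicstuff}{2} --- the key nontrivial input you use --- is referred in the paper to the very same theorem in Ronan. So your proof is not so much an independent argument as a derivation of one formulation of Ronan's result from another, equally deep, formulation; the substantive content still lives in the external reference. That said, within the logical framework of the paper (where \cref{thm:homotopicstuff} is taken as given), your derivation is valid and more informative than a bare citation.

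One small wording issue: you write that $w'$ ``reduces via a sequence of contractions'' to a reduced word $w''$. In general this requires homotopies interleaved with contractions, not contractions alone --- a non-reduced word need not literally contain a subword $ii$, only be \emph{homotopic} to one. The conclusion $|w''| \le |w'|$ is unaffected, since homotopies preserve length and contractions strictly shorten it.
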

\begin{proof}
    See \cite[Theorem 2.11]{ronan}.
\end{proof}

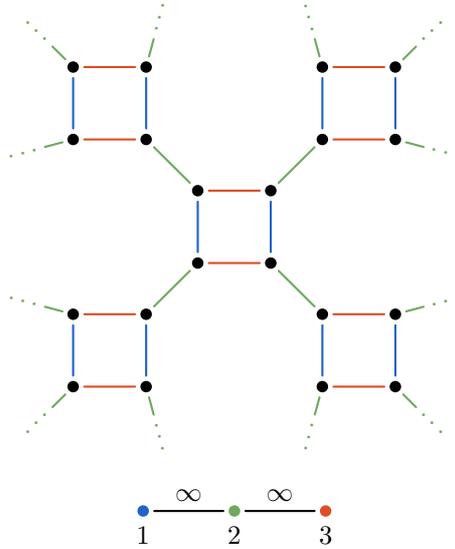
\begin{figure}[!ht]
	\centering
	\begin{tikzpicture}[scale=.8]
		\pgfmathsetmacro{\s}{1+sqrt(2)/2}
		\foreach\P/\M in {(0,0)/C, (-\s,-\s)/LL, (-\s,\s)/UL, (\s,\s)/UR, (\s,-\s)/LR}
			\foreach\Q/\N in {(-.5,-.5)/LL, (-.5,.5)/UL, (.5,.5)/UR, (.5,-.5)/LR}
				\path \P +\Q node[myvertex] (\M-\N) {};
		\begin{scope}[myedge,ugentgreen]
			\draw[-dots] (LL-LR) -- ++(285:.4);
				\draw[-dots] (LL-LL) -- ++(225:.4);
				\draw[-dots] (LL-UL) -- ++(165:.4);
			\draw[-dots] (UL-LL) -- ++(195:.4);
				\draw[-dots] (UL-UL) -- ++(135:.4);
				\draw[-dots] (UL-UR) -- ++(75:.4);
			\draw[-dots] (UR-UL) -- ++(105:.4);
				\draw[-dots] (UR-UR) -- ++(45:.4);
				\draw[-dots] (UR-LR) -- ++(345:.4);
			\draw[-dots] (LR-UR) -- ++(15:.4);
				\draw[-dots] (LR-LR) -- ++(315:.4);
				\draw[-dots] (LR-LL) -- ++(255:.4);
			\draw (C-LL) -- (LL-UR) (C-UL) -- (UL-LR) (C-UR) -- (UR-LL) (C-LR) -- (LR-UL);				
		\end{scope}
		\draw[myedge,ugentred]
			\foreach\M in {C, LL, UL, UR, LR} {(\M-LL) -- (\M-LR) (\M-UL) -- (\M-UR)};
		\draw[myedge,ugentblue]
			\foreach\M in {C, LL, UL, UR, LR} {(\M-LL) -- (\M-UL) (\M-LR) -- (\M-UR)};
	\end{tikzpicture}
	\par\bigskip
	\begin{tikzpicture}
		\path (0,0) node[myvertex,ugentblue,label=below:$1$] (A) {}
			++(1,0) node[myvertex,ugentgreen,label=below:$2$] (B) {}
			++(1,0) node[myvertex,ugentred,label=below:$3$] (C) {};
		\draw[myedge] (A) -- node[above] {$\infty$} (B) -- node[above] {$\infty$} (C);
	\end{tikzpicture}
\caption{A right-angled Coxeter complex}
\label{fig:coxeterrab}
\end{figure}

\subsection{Right-angled buildings}

\begin{definition}
	Let $(W,S)$ be a Coxeter system of type $M$ over some index set~$I$. A \emph{building $(\Delta,\delta)$ of type $M$} is a chamber system $\Delta$ over $I$ such that every panel contains at least two chambers, equipped with a map $\delta\colon\Delta\times\Delta\to W$ satisfying the following property for every reduced word $w\in I^*$:
	\[\text{$\delta(c,d) = \epsilon(w)$ if and only if $c$ and $d$ can be joined by a gallery of type $w$.}\]
	Such a gallery is automatically minimal by \cref{thm:coxcpx}. In particular, the distance between two chambers $c$ and $d$ is exactly the length of $\delta(c,d)$ in the word metric of~$W$ (w.r.t.~generating set $S$).
	
	The group $W$ is called the \emph{Weyl group} of the building, and the map $\delta$ is called the \emph{$W$\!-distance} or \emph{Weyl distance function}. 
	
	We shall usually identify the building with its chamber set and abbreviate $(\Delta,\delta)$ to $\Delta$.
\end{definition}

\begin{definition}
    A building $\Delta$ is called \emph{right-angled} if its underlying Coxeter system $(W,S)$ is right-angled (as defined in \itemref{def:cox}{RA}).
\end{definition}

\begin{definition}
	A building $\Delta$ over $I$ is called \emph{semiregular with parameters $(q_i)_{i\in I}$} if for each $i\in I$, all panels of type $i$ have the same (possibly infinite) cardinality $q_i\geq 2$. Note that the thin buildings are precisely the semiregular buildings with parameters $q_i=2$ for all $i$.
\end{definition}

The following result is attributed to Haglund and Paulin, but they point out that this fact was already known to Mark Globus (but unpublished), Michael Davis and Gabor Moussong, and Tadeusz Januszkiewicz and Jacek \'Swi\c{a}tkowski. 
\begin{theorem}
	\label{thm:rabsexist}
	For any choice of (possibly infinite) cardinal numbers $(q_i)_{i\in I}$ with $q_i \geq 2$, there exists a semiregular right-angled building $\Delta$ with these parameters. Moreover, $\Delta$ is unique up to isomorphism, the automorphism group $\Aut(\Delta)$ acts transitively on the chambers, and every automorphism of a residue of $\Delta$ extends to an automorphism of $\Delta$.
\end{theorem}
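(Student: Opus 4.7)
The plan is to handle the four claims—existence, uniqueness, chamber-transitivity of $\Aut(\Delta)$, and extension of residue automorphisms—in order, with the uniqueness argument doing most of the heavy lifting for the last two.

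For \emph{existence}, I would give an explicit ``colored word'' construction. For each $i \in I$, fix a set $X_i$ with $|X_i| = q_i$ and a distinguished basepoint $\ast_i \in X_i$. Take as chambers all finite sequences $((i_1, x_1), \dots, (i_n, x_n))$ with $i_1 \cdots i_n \in I^*$ a reduced word and $x_k \in X_{i_k} \setminus \{\ast_{i_k}\}$, modulo the equivalence generated by elementary homotopies in the underlying word (which, since $m_{ij} \in \{2,\infty\}$, only swap adjacent letters of commuting types and therefore act sensibly on the color data). Two chambers are $i$-adjacent if they differ only in the last letter, which must be of type $i$, or if one is obtained from the other by appending or deleting a letter of type $i$. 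The empty sequence serves as a base chamber; define $\delta$ by applying $\epsilon$ to the unique reduced word joining two chambers. Verifying the building axiom then reduces to \cref{thm:homotopicstuff}.

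\emph{Uniqueness} proceeds by constructing, given two such buildings $\Delta, \Delta'$ with base chambers $c_0, c_0'$, an isomorphism $\phi \colon \Delta \to \Delta'$ inductively on $\dist(c_0, -)$. To extend $\phi$ from the ball of radius $n$ to radius $n+1$, pick for each sphere-$n$ chamber $c$ and each type $i$ for which the $i$-panel through $c$ contains new chambers, an arbitrary bijection of the $q_i - 1$ new chambers onto the $q_i - 1$ new chambers in the $i$-panel through $\phi(c)$. Well-definedness amounts to checking that rank-$2$ residues are respected, which follows from \itemref{thm:homotopicstuff}{2} together with the fact that in the right-angled setting the only relevant rank-$2$ case is $m_{ij} = 2$, where the residue is a $q_i \times q_j$ ``grid'' entirely determined by its two boundary panels. \emph{Chamber transitivity} is then immediate: for any $c, d \in \Delta$, apply uniqueness with $c$ and $d$ as base chambers in the same building.

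For \emph{extension of residue automorphisms}, which I expect to be the main obstacle, let $R$ be a $J$-residue and $\alpha \in \Aut(R)$. The idea is to run the same inductive extension, but using all of $R$ as the ``base'' with $\phi|_R = \alpha$, and inducting on $\dist(c, R) := \min_{r \in R} \dist(c, r)$. At each step one exploits the right-angled projection property that every chamber $c \notin R$ has a unique gallery-minimizing projection $\proj_R(c) \in R$, so that new panels to be extended come attached to chambers of $R$ on which $\alpha$ is already specified. Checking compatibility between choices on different chambers again reduces to the analysis of commuting-type rank-$2$ residues, now ones that straddle $R$; this is precisely where the right-angled hypothesis $m_{ij} \in \{2, \infty\}$ is essential, because a non-right-angled link between types in and out of $J$ could couple $\alpha$ with the extension in a way that forbids free bijection choices on the panels outside $R$.
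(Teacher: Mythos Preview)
The paper does not prove this theorem; it simply cites \cite[Proposition~1.2]{haglundpaulin}. Your proposal is therefore not a variant of the paper's argument but an attempt to supply what the paper outsources. The outline you give---a colored-word model for existence, a sphere-by-sphere extension for uniqueness, and the same extension based at a residue for the last claim---is the standard shape of such a proof and is essentially what one finds in the cited literature.

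There is, however, a tension in your uniqueness step that would need to be resolved before this becomes a proof. You propose to pick an \emph{arbitrary} bijection on every outward-pointing panel at the sphere of radius~$n$, and then argue that well-definedness holds because a rank-$2$ residue with $m_{ij}=2$ is a grid determined by its two boundary panels. These two statements are in conflict: if the bijection on the $i$-panel through~$c$ and the bijection on the $j$-panel through~$c'$ (both at distance~$n$, with $m_{ij}=2$) are chosen independently, they will generically disagree on their common distance-$(n{+}1)$ chamber. What actually happens is that only \emph{some} panel bijections are free---roughly, one per chamber whose Weyl distance from~$c_0$ ends in a unique generator up to homotopy---while the rest are forced by exactly the grid compatibility you invoke. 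Making this precise requires some bookkeeping (a normal form, or a total order on~$I$). A cleaner route, and one aligned with the machinery the paper develops immediately afterwards (see \cref{def:coloring} and \cref{prop:colortransformation}), is to first show that legal colorings exist and then define the isomorphism $\Delta\to\Delta'$ as the unique color-preserving map with $c_0\mapsto c_0'$; this sidesteps the consistency issue entirely, and the same device handles the residue-extension claim.
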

\begin{proof}
	See \cite[Proposition 1.2]{haglundpaulin}.
\end{proof}

\subsection{Colorings and implosions of right-angled buildings}

In order to keep track of the local behavior of a building automorphism, it is useful to introduce colorings of the building.
Throughout this section, $\Delta$ is a semiregular right-angled building with parameters $(q_i)_{i\in I}$.
The following notion of legal colorings was introduced in \cite[Definition 2.42]{silva1}.
\begin{definition}
	\label{def:coloring}
	Consider a set $\Omega_i$ of cardinality $q_i$ for each $i\in I$, the elements of which we call \emph{$i$-colors} or \emph{$i$-labels}. A \emph{legal coloring} of $\Delta$ (with color sets $\Omega_i$) is a map
	\[\lambda\colon \Delta\to\prod_{i\in I} \Omega_i\colon c\mapsto(\lambda_i(c))_{i\in I}\]
	satisfying the following properties for every $i\in I$ and for every $i$-panel $\P$:
	\begin{enumerate}
		\item the restriction $\restrict{\lambda_i}{\P}\colon \P\to\Omega_i$ is a bijection;
		\item for every $j\neq i$, the restriction $\restrict{\lambda_j}{\P}\colon \P\to\Omega_j$ is a constant map.
	\end{enumerate}
\end{definition}

Such a legal coloring is essentially unique:
\begin{proposition}
	\label{prop:colortransformation}
	Let $\lambda$ and $\lambda'$ be two legal colorings of a right-angled building $\Delta$ using identical color sets. Let $c$ and $c'$ be two chambers such that $\lambda(c) = \lambda'(c')$. Then there exists an automorphism $g\in\Aut(\Delta)$ such that $g\acts c=c'$ and $\lambda'\circ g = \lambda$.
\end{proposition}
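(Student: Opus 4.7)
The plan is to first reduce to the case $c = c'$ using chamber-transitivity of $\Aut(\Delta)$, and then to construct the required stabilizer element inductively, propagating the matching of the two colorings outward from $c$ along minimal galleries.

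For the reduction, I would invoke \cref{thm:rabsexist} to pick some $h \in \Aut(\Delta)$ with $h \acts c = c'$ and set $\mu := \lambda' \circ h$. Then $\mu$ is a legal coloring satisfying $\mu(c) = \lambda'(c') = \lambda(c)$, so it suffices to produce $g_0 \in \Stab(c)$ with $\mu \circ g_0 = \lambda$: the composition $g := h \circ g_0$ then satisfies $g \acts c = c'$ and $\lambda' \circ g = \mu \circ g_0 = \lambda$. To build $g_0$, I would recurse on distance: set $g_0(c) := c$, and for a chamber $d$ at distance $n > 0$ pick any $i$-panel $\P \ni d$ containing a neighbor $d^-$ at distance $n-1$; writing $\P'$ for the $i$-panel through $g_0(d^-)$, define $g_0(d)$ to be the unique chamber in $\P'$ with $\mu_i$-color equal to $\lambda_i(d)$. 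This is well-defined by the bijection condition of \cref{def:coloring}, and the constancy condition forces $\mu_j(g_0(d)) = \mu_j(g_0(d^-)) = \lambda_j(d^-) = \lambda_j(d)$ for every $j \neq i$ by induction, so the identity $\mu \circ g_0 = \lambda$ propagates automatically.

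The main obstacle is showing that $g_0(d)$ does not depend on the chosen predecessor $d^-$ (equivalently, on the minimal gallery from $c$ to $d$ used to reach it). By \itemref{thm:homotopicstuff}{2}, any two minimal galleries from $c$ to $d$ have homotopic types, so by induction on the number of elementary homotopies it suffices to treat a single braid relation, which in the right-angled setting simply swaps two commuting letters $s_i$ and $s_j$ with $m_{ij} = 2$. Such a swap corresponds to a \emph{commuting square} of chambers of the form $d_0 \sim_i d_1 \sim_j d$ versus $d_0 \sim_j d_2 \sim_i d$ in $\Delta$; an analogous commuting square through $g_0(d_0)$ exists in $\Delta$, and the bijectivity and constancy conditions of \cref{def:coloring} applied across its four panels show that both candidate definitions of $g_0(d)$ land on the same ``fourth corner''. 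Once $g_0$ is well-defined, it is type-preserving by construction, and running the same recursion with $\lambda$ and $\mu$ interchanged produces an inverse map, so $g_0 \in \Aut(\Delta)$ and the proof is complete.
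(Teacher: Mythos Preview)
The paper does not actually prove this proposition: its entire proof is the sentence ``See \cite[Proposition 2.44]{silva1}.'' So there is nothing to compare your argument against within the paper itself.

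That said, your direct construction is essentially the standard argument one finds in the cited source, and it is correct. Two small points are worth tightening. First, for the inverse map: to feed $g_0(d^-)$ into the recursion for $g_0'$ you need to know that $g_0(d^-)$ is genuinely a predecessor of $g_0(d)$, i.e., that $g_0$ preserves distance from $c$. This follows because the image under $g_0$ of a minimal gallery of reduced type $w$ is a non-stuttering gallery of the same reduced type (non-stuttering since $\mu_{i_k}(g_0(c_k)) = \lambda_{i_k}(c_k) \neq \lambda_{i_k}(c_{k-1}) = \mu_{i_k}(g_0(c_{k-1}))$), hence again minimal. Second, ``type-preserving by construction'' only immediately covers adjacencies between a chamber and a chosen predecessor; for two $i$-adjacent chambers $d,e$ at the \emph{same} distance $n$ from $c$, observe that the projection $p = \proj_\P(c)$ of $c$ to their common $i$-panel $\P$ lies at distance $n-1$ and serves as a common predecessor, so $g_0(d)$ and $g_0(e)$ both land in the $i$-panel through $g_0(p)$. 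With these two details filled in, your argument is complete.
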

\begin{proof}
    See \cite[Proposition 2.44]{silva1}.
\end{proof}

We now recall the notion of an implosion of a right-angled building, introduced in \cite[Definition 5.2]{bossaert}.
\begin{definition}
	\label{def:implosion1}
	Let $\Delta$ be a semiregular right-angled building over $I$ and let $\lambda$ be a legal coloring of $\Delta$ (using color sets $\Omega_i$).
	For each $i \in I$, consider an equivalence relation $\equiv_i$ on $\Omega_i$, let $\Omega'_i := \Omega_i/{\equiv_i}$ and set $q'_i := \lvert \Omega'_i \rvert$. For each $\lambda_i \in \Omega_i$, we write $[\lambda_i]$ for the corresponding element of $\Omega'_i$.
	Let
	\[ I' = \{i\in I \mid \text{${\equiv_i}$ is not the universal relation}\} = \{ i \in I \mid q'_i \neq 1 \}. \]
	Define a new semiregular right-angled building $\Delta'$ over $I'$ with diagram induced by the diagram of $\Delta$, with parameters $q'_i$ (for every $i\in I'$), and with a legal coloring $\lambda'$ using the quotient $\Omega'_i$ as the set of $i$-colors.
\end{definition}

Recall that a map $f\colon X\to Y$ between metric spaces is called \emph{nonexpansive} if it does not increase distances, i.e., if $\dist_Y(f(x_1),f(x_2)) \leq \dist_X(x_1,x_2)$ for every pair $(x_1,x_2)$ of points in $X$.
\begin{proposition}
	\label{prop:implosion}
	Let $\Delta$, $\lambda$, $\equiv_i$ and $\Delta'$ be as in \cref{def:implosion1}. Let $c_0\in\Delta$ be any chamber and let $c'_0\in\Delta'$ be such that $\lambda'_i(c'_0) = [\lambda_i(c_0)]$ for every $i\in I'$. Then there exists a unique nonexpansive epimorphism $\tau$ of chamber systems from $\Delta$ onto $\Delta'$ mapping $c_0$ to $c'_0$ such that $\lambda'_i(\tau(c)) = [\lambda_i(c)]$ for all $c\in\Delta$.
\end{proposition}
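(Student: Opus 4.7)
I begin with uniqueness, which is essentially forced by the given conditions. Suppose $\tau$ satisfies all four conditions. For any gallery $c_0 \sim_{i_1} c_1 \sim_{i_2} \cdots \sim_{i_n} c$ in $\Delta$, the morphism property forces $\tau(c_k) = \tau(c_{k-1})$ when $i_k \notin I'$ and $\tau(c_k) \sim_{i_k} \tau(c_{k-1})$ when $i_k \in I'$; the coloring condition $\lambda'_{i_k}(\tau(c_k)) = [\lambda_{i_k}(c_k)]$ then pins down $\tau(c_k)$ uniquely within that $i_k$-panel by the bijectivity axiom of legal colorings (\cref{def:coloring}). Starting from $\tau(c_0) = c'_0$ and iterating along any gallery from $c_0$ to $c$, the value $\tau(c)$ is completely determined.

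For existence, I adopt precisely this rule as the definition of $\tau$. Given $c \in \Delta$, choose a minimal gallery $\gamma\colon c_0 \sim_{i_1} c_1 \sim_{i_2} \cdots \sim_{i_n} c$, set $\tau_\gamma(c_0) := c'_0$, and recursively let $\tau_\gamma(c_k)$ be the unique chamber in the $i_k$-panel through $\tau_\gamma(c_{k-1})$ whose $\lambda'_{i_k}$-color equals $[\lambda_{i_k}(c_k)]$, or $\tau_\gamma(c_{k-1})$ itself if $i_k \notin I'$. To justify setting $\tau(c) := \tau_\gamma(c)$, one must show that $\tau_\gamma(c)$ does not depend on $\gamma$.

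This independence is the heart of the proof. By \cref{thm:coxcpx} the type of $\gamma$ is reduced and evaluates to $\delta(c_0, c)$; by \itemref{thm:homotopicstuff}{2} any two such reduced types are homotopic, and in a building a minimal gallery is determined by its starting chamber, its type and its endpoint. Because the Coxeter system is right-angled, every elementary homotopy is a local swap $ij \leftrightarrow ji$ with $m_{ij} = 2$, affecting only a 2-step subgallery inside some $\{i,j\}$-residue $R$ of $\Delta$ (a 2-gon). By \cref{def:coloring} the $i$- and $j$-colors of the four chambers of $R$ are determined by those of any two opposite corners, and the corresponding residue of $\Delta'$ is itself a 2-gon, a single panel, or a point according to $\{i,j\} \cap I'$. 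A direct computation then shows that both the $ij$- and the $ji$-detour from $\tau_\gamma(c_{k-1})$ through this residue terminate at the same chamber, so $\tau_\gamma(c) = \tau_{\gamma'}(c)$.

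The remaining properties follow quickly. An analogous local check shows that the recursion is also invariant under backtracks $\cdots c_k \sim_i c_{k+1} \sim_i c_k \cdots$, so applying the same recipe to any (not necessarily minimal) gallery from $c_0$ to a chamber $e$ still yields $\tau(e)$. The morphism property then follows by extending a minimal gallery to $c$ by one $i$-step to any $d \sim_i c$, nonexpansiveness is immediate since each step of length~$1$ in $\Delta$ maps to a step of length at most~$1$ in $\Delta'$, and surjectivity follows because each quotient $\Omega_i \twoheadrightarrow \Omega'_i$ is surjective, so any gallery from $c'_0$ in $\Delta'$ can be lifted chamber by chamber to one from $c_0$ in $\Delta$. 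The principal obstacle throughout is the homotopy-invariance computation in the third paragraph: one must track the four $\{i,j\}$-colors carefully and handle the degenerate cases where one or both of $i, j$ lies outside $I'$, causing the corresponding coordinate in $\Delta'$ to collapse.
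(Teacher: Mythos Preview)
Your argument is correct. The paper does not actually prove this proposition; it simply refers the reader to \cite[Proposition~5.1 and Remark~5.4]{bossaert}. What you have written is the natural direct argument (and presumably close to what the cited reference does): define $\tau$ by pushing forward along galleries from $c_0$, reduce well-definedness to invariance under elementary homotopies $ij\leftrightarrow ji$ with $m_{ij}=2$ via \itemref{thm:homotopicstuff}{2}, and check the remaining properties by one-step extensions and lifts. The inductive observation that $\lambda'_j(\tau(c_k)) = [\lambda_j(c_k)]$ for \emph{all} $j\in I'$ (not just $j=i_k$), which you use implicitly in the backtrack and homotopy checks, follows from the constancy axiom in \cref{def:coloring} and the base case hypothesis on $c'_0$; you might make that step explicit.

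One small point of phrasing: when you write ``the four chambers of $R$'', you mean the square $\{c_{k-1},d,e,d'\}$ sitting inside the $\{i,j\}$-residue $R$, not $R$ itself (which is a full grid $\P_i\times\P_j$ and need not have only four chambers). The computation you describe lives in that square and is exactly right, including the degenerate cases $i\notin I'$ or $j\notin I'$.
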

\begin{proof}
    See \cite[Proposition 5.1 and Remark 5.4]{bossaert}.
\end{proof}
\begin{definition}
    We call the pair $(\Delta', \tau)$ from \cref{prop:implosion} the \emph{implosion} of $\Delta$ with \emph{centre} $c_0$ (with respect to the relations $\equiv_i$).
\end{definition}

\begin{corollary}
	\label{cor:residuecoloring}
	Let $\Delta$ be a semiregular right-angled building of type $M$ over $I$, let $J\subseteq I$, and let $\Gamma$ be the semiregular building of type $M_J$ over $J$ with the same parameters as $\Delta$. Then there is a map $\varphi_J\colon\Delta\to\Gamma$ with the following properties:
	\begin{enumerate}
		\item for every residue $\R$ of type $J$, the restriction $\restrict{\varphi_J}{\R}$ is an isomorphism;
		\item for every residue $\R$ of type $I\setminus J$, the restriction $\restrict{\varphi_J}{\R}$ is a constant map.
		\end{enumerate}
\end{corollary}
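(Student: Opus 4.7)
I would realize $\varphi_J$ as an implosion of $\Delta$ via \cref{prop:implosion}. Fix any legal coloring $\lambda$ of $\Delta$, and for each $i \in I$ take $\equiv_i$ to be equality on $\Omega_i$ if $i \in J$ and the universal relation otherwise. With these choices, $I' = J$, $q'_j = q_j$ for $j \in J$, and the resulting implosion $\Delta'$ is a semiregular right-angled building of type $M_J$ with parameters $(q_j)_{j \in J}$. By the uniqueness part of \cref{thm:rabsexist}, $\Delta' \cong \Gamma$; fixing such an isomorphism, I would define $\varphi_J$ to be the implosion map $\tau \colon \Delta \to \Delta'$ composed with this isomorphism.

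Property~(ii) is then essentially built into the construction: for each $i \in I \setminus J$ the quotient $\Omega'_i$ is a singleton, so $\tau$ must collapse every $i$-panel of $\Delta$ to a point. Any $(I \setminus J)$-residue is $(I \setminus J)$-connected, so $\tau$ (and hence $\varphi_J$) is constant on it.

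For property~(i), fix a $J$-residue $\R$; as a residue it is itself a semiregular right-angled building of type $M_J$ with parameters $(q_j)_{j \in J}$. The key observation will be that, for each $j \in J$, $\tau$ restricts to a \emph{bijection} on every $j$-panel $\P$: indeed, since $\equiv_j$ is equality we have $\lambda'_j(\tau(c)) = \lambda_j(c)$, and both $\lambda_j\rvert_{\P}$ and $\lambda'_j$ on the corresponding panel of $\Delta'$ are bijections onto $\Omega_j = \Omega'_j$. Surjectivity of $\tau\rvert_{\R}$ will then follow by a gallery-lifting argument: starting from any $c_0 \in \R$ and any target $d \in \Delta'$, take a gallery in $\Delta'$ from $\tau(c_0)$ to $d$ and lift it step-by-step through these panel bijections, obtaining a gallery in $\R$ ending at a preimage of $d$.

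The main obstacle will be injectivity. If $c, c' \in \R$ satisfy $\tau(c) = \tau(c')$, I would choose a gallery in $\R$ from $c$ to $c'$ of some reduced type $w \in J^*$, which exists by the building structure of $\R$. Because $\tau$ is a bijection on $j$-panels for each $j \in J$, its image is an honest (non-stammering) gallery in $\Delta'$ of the same type $w$ from $\tau(c)$ back to itself, so by the defining property of the building $\Delta'$ one has $\epsilon(w) = \delta(\tau(c), \tau(c')) = 1$; as $w$ is reduced this forces $w$ to be empty, whence $c = c'$. Consequently $\varphi_J\rvert_{\R}$ is a bijective type-preserving morphism between chamber systems of type $M_J$, i.e.\ an isomorphism.
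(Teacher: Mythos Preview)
Your proof takes exactly the same approach as the paper---invoking \cref{prop:implosion} with $\equiv_i$ equal to the identity relation for $i\in J$ and the universal relation for $i\notin J$---and the paper's own proof is in fact a single sentence saying precisely this. You go further by actually verifying (i) and (ii), which the paper leaves implicit; your panel-bijection and gallery-lifting arguments for (i) are correct, and your argument for (ii) is essentially right (strictly speaking the color condition in \cref{prop:implosion} is stated only for $i\in I'$, but the collapse of $(I\setminus J)$-panels follows from nonexpansiveness together with the color-preservation for $j\in J$).
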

\begin{proof}
	This follows from \cref{prop:implosion} by taking as equivalence relations $\equiv_i$ either the equality relation if $i\in J$ or the universal relation if $i\notin J$.
\end{proof}

\subsection{Universal groups}

Universal groups for right-angled buildings were first introduced in \cite{silva1} and further studied in the locally finite case in \cite{silva2}.
Their topological properties in the general case have been further investigated in \cite{bossaert}.

\begin{definition}
	\label{def:universal}
    Let $\Delta$ be a semiregular right-angled building over $I$, with parameters $(q_i)_{i \in I}$.
    For each $i$, let $\Omega_i$ be a color set of size $q_i$ and let $\lambda$ be a corresponding legal coloring of $\Delta$.
    \begin{enumerate}
        \item 
            Consider an automorphism $g\in\Aut(\Delta)$ and an arbitrary $i$-panel $\P$. Then we define the \emph{local action of $g$ at $\P$} as the map
        	\[\sigma_\lambda(g,\P) = \restrict{\lambda_i}{g\P} \circ \restrict{g}{\P} \circ \restrict[-1]{\lambda_i}{\P}, \]
        	which is a permutation of $\Omega_i$ by definition of $\lambda$.
        	In other words, the local action $\sigma_\lambda(g,\P)$ is the map that makes the following diagram commute.
        	\[\begin{tikzcd}[dims={7em}{4em}]
        		\P \ar[r,"g"] \ar[d,"\lambda_i"]
        			& g\P \ar[d,"\lambda_i"]\\
        		\Omega_i \ar[r,"{\sigma_\lambda(g,\,\P)}"] & \Omega_i
        	\end{tikzcd}\]
        \item 
        	Let $\F$ be a collection of permutation groups $F_i\leq\Sym(\Omega_i)$, indexed by $i\in I$. The \emph{universal group} of $\F$ over $\Delta$ (with respect to $\lambda$) is the group
        	\[
        	   \U_\Delta^\lambda(\F) = \bigl\{g\in\Aut(\Delta) \bigm| \sigma_\lambda(g,\P)\in F_i  \text{ for each $i\in I$ and each $\P\in\Res_i(\Delta)$}\bigr\}.
        	\]
        	In words, $\U_\Delta^\lambda(\F)$ is the group of automorphisms that locally act like permutations in $F_i$. We hence call the groups $F_i$ the \emph{local groups} and we refer to the collection $\F$ as the \emph{local data} for the universal group. 
    \end{enumerate}
\end{definition}
\begin{remark}\label{rem:Unotation}
\begin{enumerate}
    \item 
    	When the coloring $\lambda$ is clear from the context, we will usually omit the explicit reference to $\lambda$ and simply use the notation $\sigma(g,\P)$ and $\U_\Delta(\F)$ instead.
    	In fact, the choice of $\lambda$ is irrelevant, since different colorings give rise to conjugate subgroups of $\Aut(\Delta)$; see \cite[Proposition 3.7(1)]{silva1}\footnote{The statement of \cite[Proposition 3.7(1)]{silva1} is for \emph{locally finite} right-angled buildings only, but the proof continues to hold for arbitrary right-angled buildings, as pointed out already in \cite[\S 2.3]{bossaert}.}.
    \item\label{rem:Unotation:2}
        When each of the groups $F_i$ in the local data $\F$ is given as a permutation group acting on some set $\Omega_i$ which is clear from the context, then we will also use the notation $\U_M(\F)$ for $\U_\Delta(\F)$, where $\Delta$ is then the unique right-angled building of type $M$ over $I$ with parameters $(\lvert \Omega_i \rvert)_{i \in I}$.
\end{enumerate}
\end{remark}

The universal groups come equipped with a natural topology, namely the \emph{permutation topology}, which is defined by taking as an identity neighborhood basis the collection of all pointwise stabilizers of finite subsets of $\Delta$.

\medskip

The following observation is worth mentioning, because this is precisely the type of result we will be generalizing later.
\begin{lemma}
	\label{lem:universalreducible}
	Let $\Delta$ be a reducible right-angled building $\Delta$ over $I$.
	Let $J_1,\dots, J_m$ be the connected components of the diagram of $\Delta$. Then the universal group $\U_\Delta(\F)$ splits as~a direct product
	\[\U_\Delta(\F) \cong \U_{\R_1}\!\left(\textstyle\restrict\F{J_1}\right) \times \dotsm \times \U_{\R_m}\!\left(\textstyle\restrict\F{J_m}\right)\!,\]
	where each $\R_\ell$ is a residue of type $J_\ell$.
\end{lemma}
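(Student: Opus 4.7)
The plan is to first exhibit $\Delta$ as a direct product of chamber systems whose factors are the residues $\R_\ell$, then show that this decomposition is compatible with both the automorphism group and with a suitably chosen legal coloring.

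Because the diagram of $\Delta$ is disconnected with components $J_1,\dots,J_m$, the Weyl group factors as $W = W_{J_1}\times\cdots\times W_{J_m}$ (generators from different components commute, and all braid relations lie within a single component), and correspondingly the Weyl distance splits as $\delta = (\delta_1,\dots,\delta_m)$ with $\delta_\ell$ taking values in $W_{J_\ell}$. Fix a base chamber $c_0$ and let $\R_\ell$ denote the $J_\ell$-residue through $c_0$. I would use \cref{cor:residuecoloring} to show that the combined projection $c\mapsto(\varphi_{J_1}(c),\dots,\varphi_{J_m}(c))$ is a type-preserving isomorphism of chamber systems $\Delta \to \R_1\times\cdots\times\R_m$: both sides are semiregular right-angled buildings of type $M$ with the same parameters, and the splitting of $\delta$ guarantees that an $i$-adjacency in the product (for $i\in J_\ell$) corresponds precisely to varying only the $\ell$-th coordinate.

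Next, given any $g\in\Aut(\Delta)$, I would establish the existence of unique $g_\ell\in\Aut(\R_\ell)$ satisfying $g(r_1,\dots,r_m) = (g_1(r_1),\dots,g_m(r_m))$. Since $g$ is type-preserving it sends $J_\ell$-residues to $J_\ell$-residues, and a short gallery-chasing argument — moving from $c_0$ to an arbitrary chamber by first walking along a $J_\ell$-gallery and then a $J_k$-gallery ($k\neq\ell$), and comparing the images in both orders — shows that the action on the $\ell$-th coordinate depends only on the $\ell$-th input coordinate. Conversely, any tuple of automorphisms yields a type-preserving automorphism of the product via the diagonal action, giving a group isomorphism $\Aut(\Delta)\cong\prod_\ell\Aut(\R_\ell)$.

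Finally, I would choose legal colorings $\lambda^{(\ell)}$ of each $\R_\ell$ and define a coloring $\lambda$ on $\Delta$ by $\lambda_i(c) := \lambda_i^{(\ell)}(r_\ell)$ whenever $i\in J_\ell$ and $c$ corresponds to $(r_1,\dots,r_m)$. Legality is a routine check: on an $i$-panel (with $i\in J_\ell$) only the $\ell$-th coordinate varies, so $\lambda_i$ restricts to a bijection onto $\Omega_i$, and every $\lambda_j$ with $j\neq i$ is constant on the panel (either because $j\in J_\ell$ and $\lambda^{(\ell)}$ is legal, or because $j$ belongs to a different component whose coordinate is fixed). Under this coloring, for any $i$-panel $\P$ with $i\in J_\ell$ and $g=(g_1,\dots,g_m)$, the panels $\P$ and $g\P$ differ only in the $\ell$-th coordinate, and $\lambda_i$ reads off that coordinate via $\lambda_i^{(\ell)}$, so $\sigma_\lambda(g,\P) = \sigma_{\lambda^{(\ell)}}(g_\ell,\mathcal{Q})$ where $\mathcal{Q}$ is the corresponding $i$-panel of $\R_\ell$. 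It follows that $g\in\U_\Delta(\F)$ if and only if $g_\ell\in\U_{\R_\ell}(\restrict{\F}{J_\ell})$ for every $\ell$, yielding the claimed direct-product decomposition. The main obstacle is the diagonal decomposition of automorphisms in the middle step; once this product structure has been established, the rest is essentially bookkeeping.
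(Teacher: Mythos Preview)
Your proposal is correct and follows exactly the approach the paper takes: the paper's proof is a two-line remark that $\Delta \cong \R_1 \times \dotsm \times \R_m$ with $\Aut(\Delta) \cong \Aut(\R_1) \times \dotsm \times \Aut(\R_m)$, after which everything ``follows immediately from the definition''. You have simply unpacked those assertions in detail (the product decomposition via the projections $\varphi_{J_\ell}$, the diagonal splitting of type-preserving automorphisms, and the compatibility of a product coloring with local actions), so there is nothing to add.
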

\begin{proof}
	Since $\Delta$ is isomorphic to the direct product $\R_1 \times \dotsm \times \R_m$ and has automorphism group $\Aut(\Delta) \cong \Aut(\R_1) \times \dotsm \times \Aut(\R_m)$, this follows immediately from the definition.
\end{proof}

\section{City products}
\label{sec:cityproduct}

In this section, we develop a construction for creating new right-angled buildings of a higher rank by gluing together lower rank buildings along another diagram. Our construction is inspired by the observation that the large-scale geometry of certain right-angled buildings (such as \cref{fig:coxeterrab}) resembles that of a tree; the city product structure explains this behavior in a broad sense.

\subsection{Weak homotopies}

We start with some combinatorics, the goal of which will become clear later on.
\begin{definition}
	\label{def:weakhomotopy}
	Let $i,j\in I$ with $m_{ij}=2$ and define the set
	\[P(i,j) = \bigl\{w\in \{i,j\}^* \bigm| \text{$w$ contains at least one $i$ and at least one $j$}\bigr\}.\]
	A \emph{weak homotopy} is a transformation of a word $w_1\,p\,w_2$ into a word $w_1\,p'\,w_2$ where $w_1,w_2\in I^*$ and $p,p'\in P(i,j)$. Two words $w$ and $w'$ are \emph{weakly homotopic} if $w$ can be transformed into $w'$ by a sequence of weak homotopies.
\end{definition}

\begin{definition}
	Let $\prec$ be a total order on $I$. Endow $I^*$ with the induced lexico\-graphical order. Then every word $w\in I^*$ is homotopic to a unique lexicographically minimal word that we call the \emph{normal form} of $w$.
\end{definition}

\begin{proposition}
	\label{prop:normalform}
    Let $(W,S)$ be a right-angled Coxeter system over $I$ and let $\prec$ be a total order on $I$.
	\begin{enumerate}
		\item If two words are homotopic, then their normal forms are equal.
		\item\label{prop:normalform:2} A word is reduced if and only if its normal form contains no consecutive duplicate letters.
		\item\label{prop:normalform:3} The normal forms of weakly homotopic words are equal up to consecutive duplicate letters.
	\end{enumerate}
\end{proposition}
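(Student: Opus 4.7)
My plan is to compute normal forms via a greedy algorithm. To each word $w \in I^*$ one associates its \emph{dependency poset} on the letter occurrences (``tokens''): $\sigma \prec_P \tau$ iff there is a sequence $\sigma = s_0, s_1, \ldots, s_r = \tau$ with each $s_k$ preceding $s_{k+1}$ in $w$ and carrying a letter not commuting with that of $s_{k+1}$. The words in the homotopy class $[w]$ correspond precisely to the linear extensions of this poset, and the greedy algorithm---at each step, output the lexicographically smallest letter labeling a minimal element of the remaining poset---produces the lex-minimum linear extension, i.e., the normal form. Part (i) is then immediate, since homotopic words share the same dependency poset.

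For part (ii), the forward direction is easy: if $w$ is reduced, no word in $[w]$ contains the substring $ii$, so in particular its normal form has no consecutive duplicate letters. For the converse, suppose $w$ is not reduced and pick $u = u_1\, i i\, u_2 \in [w]$, so that the adjacent $i$-tokens $\alpha, \beta$ in $u$ form a covering pair $\alpha \lessdot \beta$ in the dependency poset. The crucial lemma is
\[ \mathrm{Pred}(\beta) = \mathrm{Pred}(\alpha) \cup \{\alpha\}, \]
proved by a short chain analysis: the penultimate element $s_{r-1}$ of any chain $\sigma \to \cdots \to \beta$ is either another $i$-token (necessarily $\leq_P \alpha$) or a non-$i$ token whose letter does not commute with $i$ and which lies in $u_1$ (hence $<_P \alpha$). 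Consequently, immediately after the greedy outputs $\alpha$ at some step $s$, the entire set $\mathrm{Pred}(\beta)$ has been processed and $\beta$ is newly minimal. A second chain argument rules out any further newly minimal token $\tau$ carrying a letter $\prec i$: such a $\tau$ would satisfy $\alpha <_P \tau$ by a chain whose first link would also yield $\beta <_P \tau$, contradicting that $\tau$ is minimal right after $\alpha$'s removal. Hence $i$ remains the lex-smallest available letter, the greedy outputs an $i$-token at step $s+1$, and the normal form contains the consecutive duplicate $ii$.

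For part (iii), I reduce to a single elementary weak homotopy $u_1 p u_2 \to u_1 p' u_2$ with $p, p' \in P(i,j)$. Standard homotopies within the $(i,j)$-block preserve the normal form by (i), so it suffices to treat the case where $p'$ is obtained from $p$ by adjoining one extra $i$-token $i_\mathrm{new}$ at the end of $p$'s $i$-chain (all placements being equivalent up to internal homotopy). Because every $j$-token of $p$ commutes with $i$, the same chain argument now yields $\mathrm{Pred}(i_\mathrm{new}) = \mathrm{Pred}(i_\mathrm{last}) \cup \{i_\mathrm{last}\}$ in the dependency poset of $w'$. Running the greedy in parallel on $w$ and $w'$, one verifies that the two outputs agree until the step $s$ at which $i_\mathrm{last}$ is output; at step $s+1$ the token $i_\mathrm{new}$ becomes minimal and, by the obstruction argument of (ii), $i$ is still the lex-smallest available letter, so $i_\mathrm{new}$ is output next; from step $s+2$ onward the remaining posets coincide and the greedys stay in sync. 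Thus $\mathrm{NF}(w')$ equals $\mathrm{NF}(w)$ with one extra $i$ inserted into an existing $i$-run, so the two normal forms agree after collapsing consecutive duplicates.

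The main obstacle is the chain-through-the-poset argument underlying the key lemma of both (ii) and (iii), together with its corollary about newly minimal tokens of smaller letter. One must carefully enumerate the possible types of the penultimate link of a chain ending at $\beta$ (resp.\ $i_\mathrm{new}$) and, more delicately, show that any hypothetical smaller-letter token entering the minimal set right after $\alpha$ (resp.\ $i_\mathrm{last}$) would be forced to depend on $\beta$ (resp.\ $i_\mathrm{new}$) and hence cannot in fact be minimal there. Once this lemma is in place, the remainder of both proofs is bookkeeping with the greedy algorithm.
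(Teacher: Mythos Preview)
Your proof is correct and takes a genuinely different route from the paper.

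The paper argues directly from lex-minimality. For (ii) it tracks the two marked $i$-tokens into the normal form $n_1\,i\,n_2\,i\,n_3$, observes that every letter of $n_2$ must commute with $i$, and then exhibits a lex-smaller homotopic word. For (iii) it simply asserts, in one sentence, that the effect of a weak homotopy on the normal form is to replace a subword $i^m j^n$ by another such subword.

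You instead set up the trace-theoretic machinery (dependency poset/heap, greedy lex-min linear extension) and prove a single structural lemma --- $\mathrm{Pred}(\beta)=\mathrm{Pred}(\alpha)\cup\{\alpha\}$ for two consecutive same-letter tokens --- which then drives both (ii) and (iii) uniformly via the ``no smaller letter becomes available immediately after outputting $\alpha$'' obstruction. Two remarks worth making explicit in your write-up: first, two minimal tokens always carry \emph{distinct} letters (same-letter tokens are comparable), so the greedy never ties and hence at step $s+1$ it outputs $i_{\mathrm{new}}$ itself, not merely some $i$-token; second, in (iii) your obstruction argument actually shows more than you claim --- \emph{no} token other than $i_{\mathrm{new}}$ becomes newly minimal in $w'$ at step $s+1$, since any such token would already have $i_{\mathrm{new}}$ as a predecessor.

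What each approach buys: the paper's is short and needs no auxiliary structures. Yours is heavier but more systematic, and in fact handles (iii) more carefully. The paper's literal claim that the block sits in the normal form as a \emph{contiguous} subword $i^m j^n$ is not true in general (the $i$-run and the $j$-run can be separated by letters from $u_1$ or $u_2$ that commute with one of $i,j$ but not the other); your parallel-greedy argument correctly shows only what is needed, namely that adding one $i$ to the block inserts exactly one $i$ into an existing $i$-run of the normal form.
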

\begin{proof}
	Claim (i) follows immediately from the definitions. For (ii), let $w\simeq w_1\,ii\,w_2$ and assume that the normal form contains no subword $ii$. Mark the two letters $i$ in $w_1\,ii\,w_2$ and write the normal form as $n_1\,i\,n_2\,i\,n_3$ (where the two letters $i$ are the marked ones). Then $n_2$ is not the empty word, so let $k$ be its first letter; by assumption, $k\neq i$. By homotopy, all letters in $n_2$ are contained in $\{i\} \cup \{i\}^\perp$, where $\{i\}^\perp := \{ j \in I \mid m_{ij} = 2 \}$. It follows that the normal form cannot be lexicographically minimal: if $i\prec k$, then the homotopic word $n_1\,ii\,n_2\,n_3$ is lexicographically smaller, and if $i\succ k$, then $n_1\,n_2\,ii\,n_3$ is smaller. Claim (ii) follows. For claim (iii), it suffices to note that the effect of a weak homotopy of a word on its normal form is that a subword $i^mj^n$ with $m\geq 1$, $n\geq 1$, is replaced by another such word.
\end{proof}

\begin{corollary}
	\label{cor:weaklyhomotopicreduced}
    Let $(W,S)$ be a right-angled Coxeter system over $I$ and let $\prec$ be a total order on $I$.
	If two reduced words $w,w'\in I^*$ are weakly homotopic, then they are homotopic.
\end{corollary}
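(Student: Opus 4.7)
The plan is to chain together the three parts of \cref{prop:normalform} in a short deduction, without doing any new combinatorial work. The statement is really just a consequence of the fact that ``reducedness'' amounts to ``normal form free of consecutive duplicate letters'', and that weak homotopy only distorts normal forms within such runs of duplicates.

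First I fix a total order $\prec$ on $I$ so that normal forms are well-defined, and I consider reduced words $w, w' \in I^*$ that are weakly homotopic. Let $n$ and $n'$ denote their respective normal forms. By \itemref{prop:normalform}{2}, since $w$ and $w'$ are reduced, neither $n$ nor $n'$ contains two consecutive equal letters. On the other hand, \itemref{prop:normalform}{3} tells me that $n$ and $n'$ agree up to consecutive duplicate letters, i.e.~they become identical after collapsing every maximal run of a repeated letter to a single occurrence of that letter. Since $n$ and $n'$ are already collapsed (no run has length $\geq 2$), this collapse is trivial on both sides, so I conclude that $n = n'$.

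Finally, since $w$ is homotopic to its normal form $n$ and $w'$ is homotopic to its normal form $n' = n$, homotopy being an equivalence relation yields $w \simeq w'$, as required.

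There is no real obstacle here beyond checking that ``agreeing up to consecutive duplicate letters'' forces literal equality once both sides are free of such duplicates; this is the one step worth spelling out carefully in the write-up, but it is essentially a tautology from the statement of \itemref{prop:normalform}{3}.
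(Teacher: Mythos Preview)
Your proof is correct and is exactly the argument the paper intends: the paper's own proof is the single line ``this follows readily from \cref{prop:normalform}\ref{prop:normalform:2} and \ref{prop:normalform:3}'', and you have simply unpacked that deduction. The key observation you spell out---that two words which coincide after collapsing consecutive duplicates must already be equal if neither contains consecutive duplicates---is precisely what the paper leaves implicit.
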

\begin{proof}
	Letting $\prec$ be any total order, this follows readily from \cref{prop:normalform}\ref{prop:normalform:2} and \ref{prop:normalform:3}.
\end{proof}

\subsection{City product of diagrams}

Now let us go back to the building realm and define an operation on the diagrams first.

\begin{definition}
	Let $M$ be a diagram of rank $n$ over the index set $\{1,\dots,n\}$, and for each $\ell \in \{ 1,\dots,n \}$, let $M_\ell$ be a diagram over an index set $I_\ell$. Then we define a new diagram as follows:
	\begin{enumerate}
		\item the index set is the disjoint union $I = \bigsqcup_{\ell=1}^n I_\ell$; 
		\item for every pair of elements $i\in I_\ell$ and $i'\in I_{\ell'}$, we set
		\[ m_{ii'} := \begin{cases}
		    m_{ii'} \text{ (considered in $M_\ell$)} & \text{ if } \ell = \ell' ; \\
		    m_{\ell\ell'} \text{ (considered in $M$)} & \text{ if } \ell \neq \ell' .
		\end{cases} \]
	\end{enumerate}
	We call this the \emph{city product} of the diagrams $M_1,\dots,M_n$ over $M$ and denote it by $\cityprod_{M}(M_1,\dots,M_n)$.	Clearly its rank is $\sum_{\ell=1}^n |I_\ell|$.
\end{definition}

Notice that the special case of a city product over an edgeless diagram (i.e., $m_{ij}=2$ for all $1\leq i\neq j\leq n$) results in nothing more than the disjoint union of the diagrams $M_1,\dots,M_n$. Two more examples are given in \cref{fig:cityprod}. (More examples will occur in \cref{ex:isom} later.) Our choice for the symbol $\cityprod$ for city products is inspired by the example from \cref{fig:cityprod:B}.

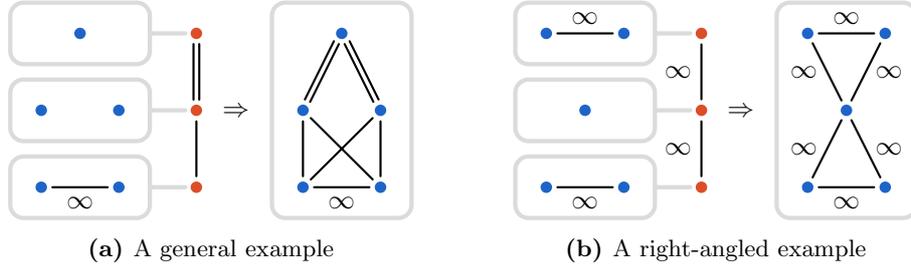
\begin{figure}[ht]
	\centering
	\begin{subfigure}[b]{.46\textwidth}
		\centering
		\begin{tikzpicture}[scale=.85]
			\path (0,0) node[myvertex,ugentblue] (A1) {}
				+(1,0) node[myvertex,ugentblue] (A2) {}
				++(0,1) node[myvertex,ugentblue] (A3) {}
				+(1,0) node[myvertex,ugentblue] (A4) {}
				++(.5,1) node[myvertex,ugentblue] (A5) {}
				(2,0) node[myvertex,ugentred] (B1) {}
				++(0,1) node[myvertex,ugentred] (B2) {}
				++(0,1) node[myvertex,ugentred] (B3) {};
			\draw[myedge,ultra thick,mydarkgray,rounded corners=5pt]
				(-.4,-.4) rectangle (1.4,.4) (1.4,0) -- (B1)
				(-.4,.6) rectangle (1.4,1.4) (1.4,1) -- (B2)
				(-.4,1.6) rectangle (1.4,2.4) (1.4,2) -- (B3);
			\draw[myedge] (A1) -- node[below] {$\infty$} (A2);
			\draw[myedge] (B1) -- (B2) (B2.105) -- (B3.255) (B2.75) -- (B3.285);
		\end{tikzpicture}
		\hspace*{1ex}$\Rightarrow$\hspace*{1ex}
		\begin{tikzpicture}[scale=.85]
			\path (0,0) node[myvertex,ugentblue] (A1) {}
				+(1,0) node[myvertex,ugentblue] (A2) {}
				++(0,1) node[myvertex,ugentblue] (A3) {}
				+(1,0) node[myvertex,ugentblue] (A4) {}
				++(.5,1) node[myvertex,ugentblue] (A5) {};
			\draw[myedge] (A1) -- node[below] {$\infty$} (A2) -- (A3) -- (A1) -- (A4) -- (A2)
			(A3.80) -- (A5.225) (A3.50) -- (A5.255) (A4.130) -- (A5.285) (A4.100) -- (A5.315);
			\draw[myedge,ultra thick,mydarkgray,rounded corners=5pt] (-.4,-.4) rectangle (1.4,2.4);
		\end{tikzpicture}
		\caption{A general example}
	\end{subfigure}
	\qquad
	\begin{subfigure}[b]{.46\textwidth}
		\centering
		\begin{tikzpicture}[scale=.85]
			\path (0,0) node[myvertex,ugentblue] (A1) {}
				+(1,0) node[myvertex,ugentblue] (A2) {}
				+(.5,1) node[myvertex,ugentblue] (A5) {}
				+(0,2) node[myvertex,ugentblue] (A3) {}
				+(1,2) node[myvertex,ugentblue] (A4) {}
				(2,0) node[myvertex,ugentred] (B1) {}
				++(0,1) node[myvertex,ugentred] (B2) {}
				++(0,1) node[myvertex,ugentred] (B3) {};
			\draw[myedge,ultra thick,mydarkgray,rounded corners=5pt]
				(-.4,-.4) rectangle (1.4,.4) (1.4,0) -- (B1)
				(-.4,.6) rectangle (1.4,1.4) (1.4,1) -- (B2)
				(-.4,1.6) rectangle (1.4,2.4) (1.4,2) -- (B3);
			\draw[myedge] (A1) -- node[below] {$\infty$} (A2);
			\draw[myedge] (A3) -- node[above] {$\infty$} (A4);
			\draw[myedge] (B1) -- node[left] {$\infty$} (B2) -- node[left] {$\infty$} (B3);
		\end{tikzpicture}
		\hspace*{1ex}$\Rightarrow$\hspace*{1ex}
		\begin{tikzpicture}[scale=.85]
			\path (0,0) node[myvertex,ugentblue] (A1) {}
				+(1,0) node[myvertex,ugentblue] (A2) {}
				+(.5,1) node[myvertex,ugentblue] (A5) {}
				+(0,2) node[myvertex,ugentblue] (A3) {}
				+(1,2) node[myvertex,ugentblue] (A4) {};
			\draw[myedge,ultra thick,mydarkgray,rounded corners=5pt] (-.4,-.4) rectangle (1.4,2.4);
			\draw[myedge] (A1) -- node[left] {$\infty$} (A5) -- node[right] {$\infty$} (A2);
			\draw[myedge] (A3) -- node[left] {$\infty$} (A5) -- node[right] {$\infty$} (A4);
			\draw[myedge] (A1) -- node[below] {$\infty$} (A2);
			\draw[myedge] (A3) -- node[above] {$\infty$} (A4);
		\end{tikzpicture}
		\caption{A right-angled example}
        \label{fig:cityprod:B}
	\end{subfigure}
\caption{City products of diagrams}
\label{fig:cityprod}
\end{figure}

\begin{lemma}
	 The diagram $\cityprod_{M}(M_1,\dots,M_n)$ with $n\geq 2$ is irreducible if and only if $M$ is irreducible.
\end{lemma}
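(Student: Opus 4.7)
My plan is to reduce irreducibility to a purely graph-theoretic statement and then handle the two directions separately. Recall that a diagram is irreducible precisely when the graph on its index set obtained by drawing an edge $\{i,j\}$ whenever $m_{ij}\neq 2$ is connected. For the city product $\cityprod_M(M_1,\dots,M_n)$, the edges split into two kinds: inside a block $I_\ell$, the edges come from $M_\ell$; between two blocks $I_\ell$ and $I_{\ell'}$ with $\ell\neq\ell'$, either \emph{every} pair $(i,i')\in I_\ell\times I_{\ell'}$ is an edge (when $m_{\ell\ell'}\neq 2$ in $M$) or \emph{no} pair is (when $m_{\ell\ell'}=2$). This ``all-or-nothing'' structure between blocks is what makes the proof essentially combinatorial.

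For the easy direction, I would argue the contrapositive: suppose $M$ is reducible, so its index set $\{1,\dots,n\}$ partitions as $J_1\sqcup J_2$ with no $M$-edges between $J_1$ and $J_2$. Setting $K_t:=\bigsqcup_{\ell\in J_t}I_\ell$ for $t=1,2$ gives a partition of $I=\bigsqcup_\ell I_\ell$, and by definition of the city product every pair $i\in K_1$, $i'\in K_2$ satisfies $m_{ii'}=m_{\ell\ell'}=2$. Hence $\cityprod_M(M_1,\dots,M_n)$ is reducible.

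For the converse, assume $M$ is irreducible and $n\geq 2$. Given any two vertices $i\in I_\ell$ and $i'\in I_{\ell'}$ of the city product, I would use irreducibility of $M$ to select a path $\ell=\ell_0,\ell_1,\dots,\ell_k=\ell'$ in the underlying graph of $M$, pick arbitrary $j_s\in I_{\ell_s}$ for $0<s<k$, and observe that the sequence $i=j_0,j_1,\dots,j_k=i'$ is a path in the underlying graph of the city product, because consecutive $j_s,j_{s+1}$ lie in $M$-adjacent blocks, so that $m_{j_sj_{s+1}}=m_{\ell_s\ell_{s+1}}\neq 2$ by construction. The only case that requires extra care is $\ell=\ell'$ with $i\neq i'$: the path $\ell_0=\ell_k$ then has $k=0$ and gives no information, and the block $M_\ell$ itself may be reducible. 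Here I invoke $n\geq 2$ together with irreducibility of $M$ to find an $M$-neighbor $\ell_1\neq\ell$ of $\ell$, pick any $j\in I_{\ell_1}$, and form the two-edge detour $i\sim j\sim i'$.

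The only real subtlety, and what I expect to be the main obstacle, is precisely this same-block case: within $I_\ell$ we may well have pairs of indices that are not connected through $M_\ell$ alone, so irreducibility of the city product cannot be established block-by-block. The argument genuinely needs the detour through a neighboring block, which is exactly where the hypotheses $n\geq 2$ and irreducibility of $M$ (ensuring that every block has some $M$-neighbor) get used.
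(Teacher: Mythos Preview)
Your proof is correct and spells out in full detail what the paper dismisses in one line (``This follows immediately from the definition''); the approach is the same, you have simply made explicit the underlying graph-theoretic argument, including the same-block detour that uses $n\geq 2$.
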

\begin{proof}
	This follows immediately from the definition.
\end{proof}

\subsection{City product of right-angled buildings}

We can now continue to define city products of right-angled buildings.
\begin{definition}
	Let $M$ be a right-angled diagram over the index set $\{1,\dots,n\}$, and for each $\ell \in \{ 1,\dots,n \}$, let $\Delta_\ell$ be a semiregular right-angled building of type $M_\ell$ over $I_\ell$. Then we define the \emph{city product} of the buildings $\{\Delta_1,\dots,\Delta_n\}$ over $M$ as follows:
	\begin{enumerate}
		\item the index set is the disjoint union $I = \bigsqcup_{\ell=1}^n I_\ell$; 
		\item the (right-angled) diagram is the city product of diagrams $\cityprod_{M}(M_1,\dots,M_n)$;
		\item for each $i\in I$, the parameter $q_i$ of the new building is the parameter $q_i$ of $\Delta_\ell$, where $i\in I_\ell$.
	\end{enumerate}
	By \cref{thm:rabsexist}, this defines a unique semiregular right-angled building (up to isomorphism), that we denote by $\cityprod_M(\Delta_1,\dots,\Delta_n)$.
	It will be convenient to define $\ell(i)$ (for $i \in I$) as the unique number in $\{1,\dots,n\}$ such that $i\in I_{\ell(i)}$.
\end{definition}

Note that for each $\ell \in \{ 1,\dots,n \}$, the residues of type $I_\ell\subseteq I$ of the city product $\cityprod_M(\Delta_1,\dots,\Delta_n)$ are isomorphic to the original building $\Delta_\ell$. As a special case of \cref{cor:residuecoloring}, we then obtain:

\begin{lemma}
	\label{lem:cityproductresidue}
	Let $\Delta = \cityprod_M(\Delta_1,\dots,\Delta_n)$ be a city product. Then for each $\ell \in \{ 1,\dots,n \}$, there is a map $\varphi_\ell\colon \Delta\to\Delta_\ell$ with the following properties:
	\begin{enumerate}
		\item for each residue $\R$ of type $I_\ell$, the restriction $\restrict{\varphi_\ell}{\R}\colon\R\to\Delta_\ell$ is an isomorphism;
		\item for each residue $\R$ of type $I\setminus I_\ell$, the restriction $\restrict{\varphi_\ell}{\R}\colon\R\to\Delta_\ell$ is a constant map.
	\end{enumerate}
\end{lemma}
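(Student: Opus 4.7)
The plan is to identify this lemma as a direct instance of Corollary~\ref{cor:residuecoloring} applied to the subset $J = I_\ell \subseteq I$. First I would verify that the induced sub-diagram of $\cityprod_M(M_1,\dots,M_n)$ on the subset $I_\ell$ coincides with $M_\ell$: indeed, by the very definition of the city product of diagrams, for indices $i, i' \in I_\ell$ the Coxeter exponent $m_{ii'}$ is taken directly from $M_\ell$ (this is the $\ell = \ell'$ clause). Similarly, the parameters $(q_i)_{i \in I_\ell}$ of $\Delta$ coincide by construction with those of $\Delta_\ell$.

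Next, by the uniqueness part of Theorem~\ref{thm:rabsexist}, the semiregular right-angled building of type $M_{I_\ell}$ with parameters $(q_i)_{i \in I_\ell}$ is isomorphic to $\Delta_\ell$. Hence Corollary~\ref{cor:residuecoloring} applied with $J = I_\ell$ produces a map $\varphi_{I_\ell}\colon \Delta \to \Gamma$ where $\Gamma \cong \Delta_\ell$; composing with this isomorphism gives the desired $\varphi_\ell\colon \Delta \to \Delta_\ell$. Properties \textup{(i)} and \textup{(ii)} in the statement of the lemma are exactly the corresponding properties given by the corollary, so they transfer immediately.

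There is essentially no obstacle: the construction of the city product was designed precisely so that this reduction works, and all the substantive work has already been absorbed into Corollary~\ref{cor:residuecoloring} (and hence into Proposition~\ref{prop:implosion} on implosions of right-angled buildings). The only minor point to record is the identification of the sub-diagram on $I_\ell$ with $M_\ell$, which is immediate from the defining cases of the city product.
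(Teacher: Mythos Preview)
Your proposal is correct and follows exactly the same approach as the paper, which simply states that the lemma follows immediately from \cref{cor:residuecoloring}. Your additional remarks---identifying the induced sub-diagram on $I_\ell$ with $M_\ell$ and invoking the uniqueness in \cref{thm:rabsexist} to match $\Gamma$ with $\Delta_\ell$---are the natural details one would supply when unpacking that one-line reference.
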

\begin{proof}
	This follows immediately from \cref{cor:residuecoloring}.
\end{proof}

We can then easily lift colorings of the subbuildings to a coloring of the full city product.

\begin{lemma}
	\label{lem:cityproductcoloring}
	Let $\Delta = \cityprod_M(\Delta_1,\dots,\Delta_n)$ be a city product. For each $\ell \in \{ 1,\dots,n \}$, let $\lambda^\ell$ be a legal coloring of $\Delta_\ell$ with color sets $\Omega_i$ (where $i$ ranges over $I_\ell$). Then the collection of maps
	\[\lambda'_i = \lambda^{\ell(i)}_i\circ\varphi_{\ell(i)}\]
	provides a legal coloring of $\Delta$ with color sets $\Omega_i$ (where $i$ ranges over $I = \bigsqcup_{\ell=1}^n I_\ell$).
\end{lemma}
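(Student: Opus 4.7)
The plan is to verify directly that the maps $\lambda'_i$ satisfy the two conditions in \cref{def:coloring}. All the work has essentially been done in \cref{lem:cityproductresidue}; what remains is to unwind the definitions carefully, distinguishing the case where $i$ and $j$ lie in the same index set $I_\ell$ from the case where they lie in different ones.

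To check condition (i), I would fix $i \in I$, let $\ell = \ell(i)$, and let $\P$ be an arbitrary $i$-panel of $\Delta$. Since $\{i\} \subseteq I_\ell$, the panel $\P$ is contained in some residue $\R$ of type $I_\ell$, and by \itemref{lem:cityproductresidue}{i} the restriction $\restrict{\varphi_\ell}{\R}$ is an isomorphism of chamber systems onto $\Delta_\ell$; in particular, $\varphi_\ell(\P)$ is an $i$-panel of $\Delta_\ell$ and $\restrict{\varphi_\ell}{\P}\colon\P\to\varphi_\ell(\P)$ is a bijection. Since $\lambda^\ell$ is a legal coloring of $\Delta_\ell$, the map $\restrict{\lambda^\ell_i}{\varphi_\ell(\P)}$ is a bijection onto $\Omega_i$, and therefore so is $\restrict{\lambda'_i}{\P} = \restrict{\lambda^\ell_i}{\varphi_\ell(\P)} \circ \restrict{\varphi_\ell}{\P}$.

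For condition (ii), I would fix the same $i$, $\ell$, $\P$, and take $j \in I$ with $j \neq i$; write $\ell' = \ell(j)$. If $\ell' = \ell$, then as above $\varphi_\ell(\P)$ is an $i$-panel of $\Delta_\ell$, and the legal coloring property of $\lambda^\ell$ gives that $\restrict{\lambda^\ell_j}{\varphi_\ell(\P)}$ is constant; hence $\restrict{\lambda'_j}{\P}$ is constant. If instead $\ell' \neq \ell$, then $i \in I_\ell \subseteq I \setminus I_{\ell'}$, so the $\{i\}$-residue $\P$ is contained in some residue $\R'$ of type $I \setminus I_{\ell'}$, on which $\varphi_{\ell'}$ is constant by \itemref{lem:cityproductresidue}{ii}. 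Consequently $\restrict{\lambda'_j}{\P} = \restrict{\lambda^{\ell'}_j}{\varphi_{\ell'}(\P)} \circ \restrict{\varphi_{\ell'}}{\P}$ is constant as well.

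There is no genuine obstacle here: the statement is a straightforward consequence of \cref{lem:cityproductresidue} together with the fact that the city product diagram is built so that indices from different $I_\ell$ commute (i.e., $m_{ij} = m_{\ell\ell'}$ for $\ell \neq \ell'$), which is exactly what is needed to ensure that an $i$-panel with $i \in I_\ell$ lies inside an $(I \setminus I_{\ell'})$-residue for every $\ell' \neq \ell$. If anything, the only minor subtlety to keep an eye on is not to confuse the two roles of $\lambda$: the coloring $\lambda^\ell$ is on $\Delta_\ell$, while $\lambda'$ is on the full city product $\Delta$, linked through the projection maps $\varphi_\ell$.
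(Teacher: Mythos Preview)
Your proof is correct and is precisely the unpacking of what the paper states in one line (``This follows immediately from \cref{lem:cityproductresidue} and the definition of legal colorings''). One small remark: in your closing paragraph you attribute the inclusion of an $i$-panel (with $i\in I_\ell$) inside an $(I\setminus I_{\ell'})$-residue to the commuting relation $m_{ij}=m_{\ell\ell'}$, but that relation need not be $2$ in a city product; the inclusion holds simply because the index sets are disjoint, i.e., $\{i\}\subseteq I_\ell\subseteq I\setminus I_{\ell'}$, and no Coxeter relation is needed there.
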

\begin{proof}
	This follows immediately from \cref{lem:cityproductresidue} and the definition of legal colorings.
\end{proof}

The city product construction over a diagram $M$ essentially glues together smaller rank buildings as if they were chambers of a building of type $M$, hence the fact that the original buildings reemerge locally as residues (\cref{lem:cityproductresidue}) should not be surprising. However, we can also recover a building of type $M$ at the global scale by relaxing the adjacencies.
\begin{definition}
	Let $\Delta = \cityprod_M(\Delta_1,\dots,\Delta_n)$ be a city product, where $M$ is a right-angled diagram over $\{1,\dots,n\}$. The \emph{skeletal building} of $\Delta$ is the chamber system $\Phi$ over the index set $\{1,\dots,n\}$ with the same chamber set as $\Delta$, but with coarser adjacencies: we declare two chambers $c,d\in\Delta$ to be $\ell$-adjacent in $\Phi$ if and only if they lie in the same residue of type $I_\ell$ in $\Delta$.
\end{definition}
We will prove in \cref{prop:cityproductskeletalstuff} that the skeletal building of a city product is, in fact, a building. First, we need an auxiliary definition and some combinatorial lemmas, laying the bridge between city products and weak homotopies.
\begin{definition}\label{def:parkour}
    Let $I = \bigsqcup_{\ell=1}^n I_\ell$.
    \begin{enumerate}
        \item 
        	The \emph{parkour map} of $(I_1,\dots,I_n)$ is the map
        	\[r\colon I^* \to \{1,\dots,n\}^*\]
        	that first replaces every letter $i\in I$ by $\ell(i)\in\{1,\dots,n\}$ and then removes consecutive duplicates (i.e., replaces them by a single letter).
        \item\label{def:parkour:bar}
            The map $\{1,\dots,n\}^* \to \{1,\dots,n\}^*$ that replaces consecutive duplicates by a single letter will be denoted by $v \mapsto \overline{v}$.
        \item
            The maximal subwords of a word $w\in I^*$ with letters in a common subset $I_\ell$ are called the \emph{blocks} of $w$. These are precisely the maximal subwords such that the image under~$r$ is a single letter.
    \end{enumerate}
\end{definition}

\begin{example}
    Consider the index sets
    \[I_1 = \{1_a, 1_b, 1_c\},
    		\quad I_2 = \{2_a, 2_b\},
    		\quad I_3 = \{3_a, 3_b, 3_c\},
    		\quad I = I_1\cup I_2\cup I_3.\]
    Then for the word $w = 2_a\, 2_b\, 3_c\, 1_c\, 1_a\, 1_b\, 1_a\, 3_b$, the image is $r(w) = 2313$. The blocks are the words
    \[2_a\, 2_b,\quad 3_c,\quad 1_c\, 1_a\, 1_b\, 1_a,\quad 3_b.\]
\end{example}

The interpretation in terms of the skeletal building is now clear: Let $\Delta = \cityprod_M(\Delta_1,\dots,\Delta_n)$ be a city product of type $I = \bigsqcup_{\ell=1}^n I_\ell$ and let $\Phi$ be its skeletal building.
If $w$ is the type of a gallery in $\Delta$, then $r(w)$ is the type of a gallery in $\Phi$ with the same extremities, but replacing subgalleries in residues of type $I_\ell$ by a single jump of type $\ell$. (This behavior explains our choice for the terminology ``parkour map''.)

\medskip

When viewed as elements of the corresponding Coxeter groups, the interplay between words in $I^*$ and words in $\{1,\dots,n\}^*$ is not completely trivial --- especially when considering reduced words. As illustrated in \cref{fig:parkourmap}, images of reduced words under the parkour map are not necessarily reduced, nor are images of equivalent words necessarily equivalent.

\begin{figure}[!ht]
\newcommand{\drawgrid}{%
	\foreach\i in {0,...,4}
		\foreach\j in {0,...,3}
			\node[myvertex] (P\i\j) at (\i,\j) {};
	\foreach\i in {0,...,4}
		{\draw[myedge,mydarkgray] (P\i0) -- (P\i1) -- (P\i2) -- (P\i3);
		\draw[myedge,mydarkgray,-dots] (P\i0) -- (\i,-.3);
		\draw[myedge,mydarkgray,-dots] (P\i3) -- (\i,3.3);
		}
	\foreach\j in {0,...,3}
		{\draw[myedge,mydarkgray] (P0\j) -- (P1\j) -- (P2\j) -- (P3\j) -- (P4\j);
		\draw[myedge,mydarkgray,-dots] (P0\j) -- (-.3,\j);
		\draw[myedge,mydarkgray,-dots] (P4\j) -- (4.3,\j);
		}
}
	\centering
	\begin{subfigure}{\textwidth}
		\centering
		\begin{tikzpicture}
			\path (0,0) node[myvertex,ugentblue] (A1) {}
				+(1,0) node[myvertex,ugentblue] (A2) {}
				++(0,1) node[myvertex,ugentblue] (B1) {}
				+(1,0) node[myvertex,ugentblue] (B2) {}
				(2,0) node[myvertex,ugentred] (X1) {}
				++(0,1) node[myvertex,ugentred] (X2) {};
			\draw[myedge,ultra thick,mydarkgray,rounded corners=5pt]
				(-.4,-.4) rectangle (1.4,.4) (1.4,0) -- (X1)
				(-.4,.6) rectangle (1.4,1.4) (1.4,1) -- (X2);
			\draw[myedge] (A1) -- node[above] {$\infty$} (A2);
			\draw[myedge] (B1) -- node[above] {$\infty$} (B2);
		\end{tikzpicture}
		\quad$\Rightarrow$\quad
		\begin{tikzpicture}
			\path (0,0) node[myvertex,ugentblue,label=left:$2_a$] (A1) {}
				+(1,0) node[myvertex,ugentblue,label=right:$2_b$] (A2) {}
				++(0,1) node[myvertex,ugentblue,label=left:$1_a$] (B1) {}
				+(1,0) node[myvertex,ugentblue,label=right:$1_b$] (B2) {};
			\draw[myedge,ultra thick,mydarkgray,rounded corners=5pt] (-.6,-.4) rectangle (1.6,1.4);
			\draw[myedge] (A1) -- node[above] {$\infty$} (A2) (B1) -- node[above] {$\infty$} (B2);
		\end{tikzpicture}
		\caption{The ambient (reducible) city product. Notice that this occurs as a residue of the irreducible city product from \cref{fig:cityprod:B}.}
	\end{subfigure}
	\par
	\begin{subfigure}{\textwidth}
	\begin{subfigure}[b]{.32\textwidth}
		\centering
		\begin{tikzpicture}[x=7mm,y=7mm]
			\drawgrid
			\draw[myedge,black] (P03) -- (P13) -- (P23) -- (P33) -- (P43) -- (P42) -- (P41) -- (P40);
		\end{tikzpicture}
		$r(w) = 12$,
	\end{subfigure}
	\begin{subfigure}[b]{.32\textwidth}
		\centering
		\begin{tikzpicture}[x=7mm,y=7mm]
			\drawgrid
			\draw[myedge,black] (P03) -- (P13) -- (P23) -- (P22) -- (P21) -- (P20) -- (P30) -- (P40);
		\end{tikzpicture}
		$r(w) = 121$,
	\end{subfigure}
	\begin{subfigure}[b]{.32\textwidth}
		\centering
		\begin{tikzpicture}[x=7mm,y=7mm]
			\drawgrid
			\draw[myedge,black] (P03) -- (P02) -- (P12) -- (P11) -- (P21) -- (P31) -- (P30) -- (P40);
		\end{tikzpicture}
		$r(w) = 212121$.
	\end{subfigure}
    \caption{The parkour map $r \colon \{ 1_a, 1_b, 2_a, 2_b \}^* \to \{ 1, 2 \}^*$. Horizontal lines alternate between $1_a$~and~$1_b$, vertical lines alternate between $2_a$ and $2_b$.}
	\end{subfigure}
\caption{The effect of the parkour map on equivalent types of minimal galleries}
\label{fig:parkourmap}
\end{figure}

The following slightly technical lemma explains the connection in more detail.
\begin{lemma}
	\label{lem:parkourmap}
	Let $M$ be a diagram of rank $n$ over the index set $\{1,\dots,n\}$ and for each $\ell \in \{ 1,\dots,n \}$, let $M_\ell$ be a diagram over $I_\ell$.
	Consider the city product $\cityprod_{M}(M_1,\dots,M_n)$, with index set $I = \bigsqcup_{\ell=1}^n I_\ell$.
	Let $u\in I^*$ and let $r\colon I^*\to\{1,\dots,n\}^*$ be the parkour map.
	\begin{enumerate}
		\item\label{lem:parkourmap:1} If $u\simeq u'$, then $r(u)$ and $r(u')$ are weakly homotopic (in the sense of \cref{def:weakhomotopy}).
		\item\label{lem:parkourmap:2} Assume that we have a homotopy $r(u)\simeq v$.
            Then there exists $u'\in I^*$ such that $u'\simeq u$ and $r(u')=\overline{v}$,
            where $\overline{v}$ is as in \itemref{def:parkour}{bar}.
			\[\begin{tikzcd}[dims={5em}{4.5em}]
				u \ar[d,"r"] \ar[rr,"\simeq"] && u' \ar[d,"r"] \\
				r(u) \ar[r,"\simeq"] & v \ar[r,dashed] & \overline{v}
			\end{tikzcd}\]
		\item\label{lem:parkourmap:3} If\, $u$ is reduced, then all blocks of $u$ are reduced.
		\item\label{lem:parkourmap:4} If all blocks of $u$ are reduced and $r(u)$ is reduced, then $u$ is reduced.
		\item\label{lem:parkourmap:5} If\, $u\simeq u'$ and both $r(u)$ and $r(u')$ are reduced, then $r(u)\simeq r(u')$.
	\end{enumerate}
\end{lemma}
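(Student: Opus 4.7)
The plan is to prove the five parts in order, relying throughout on the block decomposition $u = B_1 \cdots B_k$ of $u$ into maximal subwords whose letters lie in a common $I_\ell$, so that $r(u) = \ell_1 \cdots \ell_k$ has no consecutive duplicates. For \emph{(i)}, it suffices to analyze a single elementary homotopy in the right-angled setting, namely a swap of two adjacent commuting letters $i, j$. If $\ell(i) = \ell(j)$, the swap leaves $r(u)$ unchanged. Otherwise the commutation forces $m_{\ell(i)\ell(j)} = 2$ in $M$, and the swap perturbs a maximal contiguous $(I_{\ell(i)} \cup I_{\ell(j)})$-run in $u$, whose image after $r_0$ and collapsing is an alternating word in $P(\ell(i), \ell(j))$; hence the transformation $r(u) \to r(u')$ replaces one element of $P(\ell(i), \ell(j))$ by another, which is exactly a weak homotopy. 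Part \emph{(v)} is then immediate by chaining (i) with \cref{cor:weaklyhomotopicreduced}.

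For \emph{(ii)}, homotopies preserve the multiset of letters, so the hypothesis $r(u) \simeq v$ yields $v = \ell_{\sigma(1)} \cdots \ell_{\sigma(k)}$ for some permutation $\sigma$ induced by the sequence of elementary swaps. Each such swap lifts to a block-level swap $B_a B_b \leftrightarrow B_b B_a$: since $m_{\ell_a\ell_b} = 2$ in $M$ implies $m_{ij} = 2$ for all $i \in I_{\ell_a}$ and $j \in I_{\ell_b}$ in the city product diagram, the block swap is realized by a sequence of elementary letter swaps. Setting $u' := B_{\sigma(1)} \cdots B_{\sigma(k)}$ then yields $u' \simeq u$, and the merging of consecutive same-type blocks of $u'$ precisely realizes the duplicate-removal $v \mapsto \overline{v}$, giving $r(u') = \overline{v}$. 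Part \emph{(iii)} is then immediate by contradiction: a failure of reducedness within $B_s$ takes place entirely in $I_{\ell_s}^*$ using only relations of $M_{\ell_s}$, which coincide with the city product relations restricted to $I_{\ell_s}$, so lifting to $u$ contradicts its reducedness.

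Part \emph{(iv)}, the main obstacle, I would prove by contraposition, using the standard characterization of non-reducedness in right-angled Coxeter groups: a word $w$ is not reduced if and only if there exist positions $p < q$ with $w_p = w_q$ and $m_{w_p w_r} = 2$ for every $p < r < q$. This characterization can be extracted from \cref{thm:homotopicstuff}\ref{thm:homotopicstuff:3} by tracking marked letters across elementary commuting swaps. Given $u$ non-reduced, choose such $p < q$ with $i := u_p = u_q$ and set $\ell := \ell(i)$. If $p$ and $q$ lie in the same block $B_s$, the intermediate letters lie in $I_{\ell_s}$ and commute with $i$ in $M_{\ell_s}$, so the same characterization applied inside $M_{\ell_s}$ shows $B_s$ is not reduced. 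Otherwise $p \in B_s$ and $q \in B_t$ with $s < t$ and $\ell_s = \ell_t = \ell$; the intermediate letters span blocks $B_{s+1}, \dots, B_{t-1}$ and are all forced to commute with $i$, which forces $m_{\ell \ell_{s'}} = 2$ in $M$ for $s < s' < t$. In $r(u) = \ell_1 \cdots \ell_k$, the letter $\ell_t$ then commutes with $\ell_{s+1}, \dots, \ell_{t-1}$ and can be homotoped backward into a duplicate $\ell_s \ell_t = \ell\ell$, so $r(u)$ is not reduced. The delicate point is the combinatorial characterization itself; once it is in hand, the case split is exactly what turns the single obstruction in $u$ into an obstruction either inside a block or at the parkour-map level.
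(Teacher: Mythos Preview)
Your argument is correct and follows the same overall strategy as the paper: (i) by analysing a single commuting swap, (ii) by lifting each elementary homotopy on $r(u)$ to a swap of whole blocks, (iii) trivially, (iv) by contraposition via tracked marked letters, and (v) from (i) together with \cref{cor:weaklyhomotopicreduced}. Your packaging of (i) through the maximal $(I_{\ell(i)}\cup I_{\ell(j)})$-run is in fact tidier than the paper's nine-case table of boundary types; the paper arrives at the same conclusion by tabulating how the subword of $r(u)$ near the swap transforms in each case.

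One small imprecision in your (iv) (which the paper's proof shares): you assert that the commutation of every intermediate letter with $i$ ``forces $m_{\ell\,\ell_{s'}}=2$ in $M$ for $s<s'<t$'', but an intermediate block may itself have type $\ell_{s'}=\ell$; in that case the letters of $B_{s'}$ commute with $i$ inside $M_\ell$, which says nothing about $M$, and $m_{\ell\ell}=1$. This does not affect the conclusion---simply pass to a closest pair of occurrences of $\ell$ among $\ell_s,\dots,\ell_t$, so that no intermediate $\ell_{s'}$ equals $\ell$---but strictly speaking it deserves a sentence.
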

\begin{proof}
    \begin{enumerate}
        \item 
        	Consider an elementary homotopy $u=u_1\,ij\,u_2 \simeq u_1\,ji\,u_2$. If $\ell(i)=\ell(j)$, then the image under $r$ remains unchanged. Assume now that $\ell(i)\neq\ell(j)$. We distinguish three cases for the subword $u_1$ of $u$:
        	\begin{description}\setlength{\itemindent}{-7ex}
        		\item[{[\textcolor{ugentblue}{\textsf{L.a}}]}] $u_1$ is nonempty and the last letter of $u_1$ is in $I_{\ell(i)}$,
        		\item[{[\textcolor{ugentred}{\textsf{L.b}}]}] $u_1$ is nonempty and the last letter of $u_1$ is in $I_{\ell(j)}$,
        		\item[{[\textsf{L.c}]}] $u_1$ is the empty word, or the last letter of $u_1$ is neither in $I_{\ell(i)}$ nor $I_{\ell(j)}$.
        	\end{description}
        	Analogously, we distinguish three cases for $u_2$:
        	\begin{description}\setlength{\itemindent}{-7ex}
        		\item[{[\textcolor{ugentblue}{\textsf{R.a}}]}] $u_2$ is nonempty and the first letter of $u_2$ is in $I_{\ell(i)}$,
        		\item[{[\textcolor{ugentred}{\textsf{R.b}}]}] $u_2$ is nonempty and the first letter of $u_2$ is in $I_{\ell(j)}$,
        		\item[{[\textsf{R.c}]}] $u_2$ is the empty word, or the first letter of $u_2$ is neither in $I_{\ell(i)}$ nor $I_{\ell(j)}$.
        	\end{description}
        	\smallskip
        	Depending on the nine combinations of possibilities, the elementary homotopy $u_1\,ij\,u_2 \simeq u_1\,ji\,u_2$ transforms the image $r(u)$ by substituting some subword in $\{ij,ji,iji,jij,ijij,jiji\}$ (where we have simply written $i$ and $j$ instead of $\ell(i)$ and $\ell(j)$ for better readability) into another such word; see \cref{fig:parkourhomotopy}. In each case, the result is weakly homotopic to $r(u)$.
        	
            \begin{table}[!ht]
            	\centering
            	\begin{tikzpicture}
            		\path (0,0) node[myvertex] (L) {}
            			+(45:1) node[myvertex] (U) {}
            			++(-45:1) node[myvertex] (D) {}
            			++(45:1) node[myvertex] (R) {};
            		\path (D.center) to node[midway,rotate=-90] {\strut$\smash{\Longrightarrow}$} (U.center);
            		\draw[myedge,ugentblue] (L) -- node[above left] {$i$\vphantom{$j$}} (U);
            		\draw[myedge,ugentblue,dashed] (D) -- node[below right] {$i$} (R);
            		\draw[myedge,ugentblue,-dots] (L) -- +(225:.667) node[pos=1.7,left] {\bfseries\sffamily\textcolor{ugentblue}{L.a}};
            		\draw[myedge,ugentblue,-dots] (R) -- +(45:.667) node[pos=1.7,right] {\bfseries\sffamily\textcolor{ugentblue}{R.a}};
            		\draw[myedge,ugentred,dashed] (L) -- node[below left] {$j$} (D);
            		\draw[myedge,ugentred] (U) -- node[above right] {$j$} (R);
            		\draw[myedge,ugentred,-dots] (L) -- +(135:.667) node[pos=1.7,left] {\bfseries\sffamily\textcolor{ugentred}{L.b}};
            		\draw[myedge,ugentred,-dots] (R) -- +(-45:.667) node[pos=1.7,right] {\bfseries\sffamily\textcolor{ugentred}{R.b}};
            		\draw[myedge,-dots] (L) -- +(180:.667) node[pos=2.3] {\bfseries\sffamily L.c};
            		\draw[myedge,-dots] (R) -- +(0:.667) node[pos=2.3] {\bfseries\sffamily R.c};;
            	\end{tikzpicture}
            	\par\bigskip
            	\renewcommand{\arraystretch}{1.25}
            	\newcommand{\blah}[2]{$\mathllap{#1}\rightsquigarrow\mathrlap{#2}$}
            	\begin{tabular}{p{6mm}*{3}{>{\centering\arraybackslash}p{20mm}}}
            		\toprule
            		& {\bfseries\sffamily\textcolor{ugentblue}{R.a}} & {\bfseries\sffamily\textcolor{ugentred}{R.b}} & {\bfseries\sffamily {R.c}}\\
            		\cmidrule(l){2-4}
            		{\bfseries\sffamily\textcolor{ugentblue}{L.a}} & \blah{iji}{iji} & \blah{ij}{ijij} & \blah{ij}{iji} \\
            		{\bfseries\sffamily\textcolor{ugentred}{L.b}} & \blah{jiji}{ji} & \blah{jij}{jij} & \blah{jij}{ji} \\
            		{\bfseries\sffamily L.c} & \blah{iji}{ji} & \blah{ij}{jij} & \blah{ij}{ji} \\
            		\bottomrule
            	\end{tabular}
            	\bigskip
                \caption{The effect of an elementary homotopy on the image of the parkour map. We have simply written $i$ and $j$ instead of $\ell(i)$ and $\ell(j)$ for better readability.}
                \label{fig:parkourhomotopy}
            \end{table}
            \vspace*{-3ex}
    	\item
        	Consider an elementary homotopy $r(u) = v_1\,\ell_1\ell_2\,v_2 \simeq v_1\,\ell_2\ell_1\,v_2 = v$ with $1\leq\ell_1\neq \ell_2\leq n$ and such that $\ell_1$ and $\ell_2$ commute in $M$. Then we can write $u = u_1\,b_1b_2\,u_2$, where $b_1$ and $b_2$ are the blocks corresponding to $\ell_1$ and~$\ell_2$, respectively. Since $b_1$ and $b_2$ have only letters in $I_{\ell_1}$ and in $I_{\ell_2}$, which are sets of pairwise commuting generators in the Coxeter system, we have a homotopy $u' = u_1\,b_2b_1\,u_2 \simeq u_1\,b_1b_2\,u_2$ satisfying $r(u') = \overline{v}$.
        	The claim now follows by induction on the number of elementary homotopies needed to go from $r(u)$ to~$v$.
    	\item
        	This is obvious since any subword of a reduced word is reduced.
    	\item
        	Assume by means of contraposition that every block of $u$ is reduced while $u$ is not, i.e., there is a homotopy $u\simeq w_1\,ii\,w_2$. Mark these two letters $i$ and let $b_1$ and $b_2$ be the two blocks of $u$ containing these two marked letters. Since every block is reduced, we have $b_1\neq b_2$, hence $u = u_1\,b_1\,u_2\,b_2\,u_3$ such that $u_2$ is nonempty and $m_{ij}=2$ for every letter $j$ in $u_2$. The image then satisfies $r(u) = r(u_1)\,\ell(i)\,r(u_2)\,\ell(i)\,r(u_3)$. By construction, $\ell(i)$ commutes with every letter in $r(u_2)$. Hence $r(u)$ is not reduced.
    	\item
        	This follows from \ref{lem:parkourmap:1} and \cref{cor:weaklyhomotopicreduced}.
        \qedhere
    \end{enumerate}
\end{proof}

\begin{proposition}
	\label{prop:cityproductskeletalstuff}
	Let $M$ be a diagram of rank $n$, let $\Delta_1,\dots,\Delta_n$ be right-angled buildings,
	let $\Delta :=\cityprod_M(\Delta_1,\dots,\Delta_n)$ be their city product and let $\Phi$ be its skeletal building.
	Then
	\begin{enumerate}
		\item $\Phi$ is a right-angled building of type $M$ over $\{1,\dots,n\}$;
		\item $\Phi$ is semiregular with parameters $q_\ell = \lvert\Delta_\ell\rvert$ for every $\ell \in \{ 1,\dots,n \}$;
		\item $\ell$-panels of\, $\Phi$ (as sets of chambers) are $I_\ell$-residues of $\Delta$ and vice versa;
		\item the maps $\varphi_\ell$ with $\ell \in \{ 1,\dots,n \}$ (introduced in \cref{lem:cityproductresidue}) provide a legal coloring of\, $\Phi$ with color sets $\Delta_\ell$.
	\end{enumerate}
\end{proposition}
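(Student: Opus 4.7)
Assertions~(iii), (ii) and~(iv) follow directly from unpacking the definitions of $\Phi$ and of the maps $\varphi_\ell$. For~(iii), the $\ell$-panels of $\Phi$ are by construction its $\ell$-adjacency classes, which by the definition of the skeletal building coincide with the $I_\ell$-residues of $\Delta$. Assertion~(ii) is then immediate: by \cref{lem:cityproductresidue}, each $I_\ell$-residue of $\Delta$ is isomorphic to $\Delta_\ell$, so every $\ell$-panel of $\Phi$ has cardinality $\lvert \Delta_\ell \rvert$. For~(iv), the two defining conditions of a legal coloring both reduce to \cref{lem:cityproductresidue}: the restriction of $\varphi_\ell$ to an $\ell$-panel is a bijection onto $\Delta_\ell$ by part~(i) of that lemma, while its restriction to any $\ell'$-panel with $\ell' \neq \ell$ is constant by part~(ii), since such a panel is an $I_{\ell'}$-residue and therefore lies inside a residue of type $I \setminus I_\ell$.

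The substantial part of the proposition is assertion~(i). The plan is to construct a Weyl distance $\delta_\Phi \colon \Phi \times \Phi \to W_M$ and verify the gallery axiom. Given $c, d \in \Phi$, I start from any reduced $u \in I^*$ representing $\delta_\Delta(c,d)$ and iteratively apply \itemref{lem:parkourmap}{2} to replace $u$ by a homotopic word with strictly shorter parkour image. The process terminates at some $u$ for which $r(u)$ is reduced in $W_M$, and I then set $\delta_\Phi(c,d) := \epsilon(r(u))$, where here $\epsilon$ denotes the evaluation morphism $\{1,\dots,n\}^* \to W_M$. For well-definedness, any two reduced $W$-words representing $\delta_\Delta(c,d)$ are homotopic by \itemref{thm:homotopicstuff}{2}, so if both of their parkour images are reduced in $W_M$ then \itemref{lem:parkourmap}{5} forces those images to be $W_M$-homotopic, hence to define the same element of $W_M$.

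It remains to verify that, for every reduced $v \in \{1,\dots,n\}^*$, one has $\delta_\Phi(c,d) = \epsilon(v)$ if and only if $c$ and $d$ are joined by a minimal gallery of type $v$ in $\Phi$. For the ``if'' direction, given such a gallery $c_0, \dots, c_m$ with $c_{k-1} \sim_{\ell_k} c_k$, I lift each step to a non-empty reduced $W$-word $u_k \in I_{\ell_k}^*$ inside the ambient $I_{\ell_k}$-residue of $\Delta$ and concatenate to $u := u_1 \cdots u_m$. Since $v$ is reduced we have $\ell_k \neq \ell_{k+1}$, so the $u_k$ are exactly the blocks of $u$, each of which is reduced; together with $r(u) = v$ being reduced, \itemref{lem:parkourmap}{4} implies that $u$ itself is reduced in $W$. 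It therefore represents $\delta_\Delta(c,d)$, and by construction $\delta_\Phi(c,d) = \epsilon(v)$. For the converse, starting from $u$ as in the definition of $\delta_\Phi$, I take any minimal $\Delta$-gallery of type $u$ and collapse each block to a single skeletal adjacency, producing a $\Phi$-gallery of type $r(u)$ from $c$ to $d$; since $r(u)$ and $v$ are both reduced in $W_M$ and represent the same element, they are $W_M$-homotopic, and I transform the gallery into one of type $v$ by realising this homotopy step by step.

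The key technical point --- and where the city-product structure is essential --- is the following \emph{swap lemma} needed to realise each elementary $W_M$-homotopy: whenever $c' \sim_\ell c'' \sim_{\ell'} c'''$ in $\Phi$ with $\ell \neq \ell'$ and $m_{\ell\ell'} = 2$ in $M$, there exists $c^{\mathrm{iv}} \in \Phi$ with $c' \sim_{\ell'} c^{\mathrm{iv}} \sim_\ell c'''$. The condition $m_{\ell\ell'} = 2$ in $M$ means that the sub-diagram of $\cityprod_M(M_1,\dots,M_n)$ on $I_\ell \cup I_{\ell'}$ is disconnected, so the $(I_\ell \cup I_{\ell'})$-residue of $\Delta$ containing $c''$ is a reducible right-angled building that factors as a direct product of the $I_\ell$-residue and the $I_{\ell'}$-residue through $c''$. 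In this product $c'$ and $c'''$ occupy two consecutive edges of a rectangle with apex $c''$, so the opposite corner $c^{\mathrm{iv}}$ exists by the product structure. Iterating this swap along the homotopy $r(u) \simeq v$ completes the ``only if'' direction, and the condition $\lvert \Delta_\ell \rvert \geq 2$ --- built into each $\Delta_\ell$ being a building --- ensures that every panel of $\Phi$ contains at least two chambers.
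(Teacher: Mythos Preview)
Your argument is correct.  The construction of $\delta_\Phi$ via a reduced representative $u$ of $\delta_\Delta(c,d)$ with $r(u)$ reduced, together with the well-definedness check using \itemref{lem:parkourmap}{5}, matches the paper's section map $s$ exactly; and your direction ``gallery of type $v$ in $\Phi$ implies $\delta_\Phi(c,d)=\epsilon(v)$'' is the same as the paper's, via \itemref{lem:parkourmap}{4} and~\ref{lem:parkourmap:5}.

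The one genuine divergence is in the other direction, ``$\delta_\Phi(c,d)=\epsilon(v)$ implies a $\Phi$-gallery of type $v$ exists''.  The paper does not prove or use your swap lemma.  Instead, it lifts the homotopy $r(u)\simeq v$ back to $I^*$ using \itemref{lem:parkourmap}{2}: this produces a word $u'\simeq u$ with $r(u')=\overline v=v$ (since $v$ is already reduced); because $u'$ is homotopic to the reduced word $u$, it is itself reduced, so the building axiom for $\Delta$ furnishes a minimal $\Delta$-gallery of type $u'$, which then collapses under $r$ to a $\Phi$-gallery of type $v$.  Your route instead first collapses to a $\Phi$-gallery of type $r(u)$ and then rearranges it inside $\Phi$ by realising the homotopy $r(u)\simeq v$ one elementary swap at a time, appealing to the product structure of $(I_\ell\cup I_{\ell'})$-residues in $\Delta$ when $m_{\ell\ell'}=2$.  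Both are valid; the paper's version is slightly more economical since it extracts everything from \cref{lem:parkourmap} and the building axiom for $\Delta$ without the separate geometric swap argument, while your version is more hands-on and makes visible \emph{why} the skeletal adjacencies can be exchanged.
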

\begin{proof}
	The only nontrivial claim is that $\Phi$ is indeed a building of type $M$; the other claims will then follow immediately from the definitions. Let us first write out the Weyl distance function in $\Phi$ and then verify that it satisfies the necessary properties. 
	
	Denote by $W_\Delta$ the Weyl group of the building $\Delta$. Recall from \cref{def:evaluationmorphism} the evaluation morphism $\epsilon_\Delta\colon I^* \to W_\Delta$. On the other hand, we have a Coxeter group $W_\Phi$ of type $M$, together with an evaluation morphism $\epsilon_\Phi\colon \{1,\dots,n\}^* \to W_\Phi$. Next, we let $s\colon W_\Delta\to I^*$ be a section of $\epsilon_\Delta$ with reduced images such that for each $w\in W_\Delta$, the word length $\lvert r(s(w))\rvert$ is minimal among all possible choices for $s(w)$.
	We claim that for each $w \in W_\Delta$, the word $r(s(w))$ in $\{ 1,\dots,n \}^*$ is reduced.
	Indeed, if $r(s(w))$ would not be reduced, then by \itemref{lem:parkourmap}{2}, we would find some word $u' \in I^*$ with $u' \simeq s(w)$ (so that $\epsilon_\Delta(u') = \epsilon_\Delta(s(w))$) for which $r(u')$ has smaller word length than $r(s(w))$.
	
	Finally, we can define
	\[\delta_\Phi = \epsilon_\Phi\circ r\circ s\circ \delta_\Delta \colon \Phi\times\Phi\to W_\Phi.\]
	(Notice that the composition $\epsilon_\Phi\circ r\circ s$ is a map $W_\Delta\to W_\Phi$ but by no means a group homomorphism.)
	\[\begin{tikzcd}[dims={8em}{4.5em}]
		\Delta\times\Delta \ar[r,"\delta_\Delta"] \ar[d,equal] &
			W_\Delta \ar[d,dashed] \arrow[bend left=18]{r}{s}
			& I^* \ar[d,"r"] \arrow[bend left=18]{l}{\epsilon_\Delta} \\
		\Phi\times\Phi \ar[r,"\delta_\Phi"]
			& W_\Phi
			& \{1,\dots,n\}^* \ar[l,"\epsilon_\Phi",swap]
	\end{tikzcd}\]
	\smallskip
	
	Clearly, panels of $\Phi$ contain at least two chambers, since every such panel of $\Phi$ contains a panel of $\Delta$. Now consider a reduced word $v$ in $\{1,\dots,n\}^*$. Our goal is to show that $\delta_\Phi(c,d)=\epsilon_\Phi(v)$ if and only if there exists a gallery of type $v$ from $c$ to $d$ in $\Phi$.

	First, assume that $\delta_\Phi(c,d)=\epsilon_\Phi(v)$. By definition of $\delta_\Phi$, this means that the words $(r\circ s\circ\delta_\Delta)(c,d)$ and $v$ are equivalent. Moreover, both words are reduced, and hence homotopic by \itemref{thm:homotopicstuff}{2}. By \itemref{lem:parkourmap}{2}, this homotopy can be realized in $I^*$, i.e., we can find a word $u\in I^*$ such that $u \simeq (s\circ\delta_\Delta)(c,d)$ and $r(u) = v$. The homotopy $u \simeq (s\circ\delta_\Delta)(c,d)$ yields that $u$ is reduced, hence by the building axioms for $\Delta$, there is a minimal gallery in $\Delta$ of type $u$ from $c$ to $d$. Then $r(u)=v$ is the type of a gallery in $\Phi$ from $c$ to $d$.
	
	Conversely, assume that $\gamma$ is a gallery of type $v$ from $c$ to $d$ in $\Phi$. We can ``lift'' $\gamma$ to a gallery $\widebar\gamma$ in $\Delta$ with the same extremities, by replacing each $\ell$-adjacency in $\gamma$ by a minimal gallery in a residue of type $I_\ell$ of $\Delta$. Let $\widebar v$ be the type of $\widebar\gamma$. Note that $r(\widebar v)=v$ and that $\widebar v$ is reduced by \itemref{lem:parkourmap}{4}. Hence, we have  $\delta_\Delta(c,d)=\epsilon_\Delta(\widebar v)$, so that $s(\delta_\Delta(c,d))$ and $\widebar v$ are homotopic by \cref{thm:homotopicstuff}\ref{thm:homotopicstuff:1}~and~\ref{thm:homotopicstuff:2}. Then by \itemref{lem:parkourmap}{5}, the images $(r\circ s\circ\delta_\Delta)(c,d)$ and $r(\widebar v)=v$ are homotopic, so that finally
	\[\delta_\Phi(c,d) = (\epsilon_\Phi\circ r\circ s\circ\delta_\Delta)(c,d) = \epsilon_\Phi(v).\]
	This concludes our proof that $\Phi$ is a right-angled building of type $M$.
\end{proof}

\begin{example}
    Consider the Coxeter complex from \cref{fig:coxeterrab}.
    This is a thin building of rank three, so each of the three parameters $q_1$, $q_2$ and $q_3$ is equal to $2$.
    We can now view this building as a city product of two thin buildings, one of rank $1$ and one of rank $2$:
    \[
		\begin{tikzpicture}[scale=.85]
			\path (0,0) node[myvertex,ugentblue] (A1) {}
				+(1,0) node[myvertex,ugentblue] (A2) {}
				+(.5,1) node[myvertex,ugentblue] (A5) {}
				(2,0) node[myvertex,ugentred] (B1) {}
				++(0,1) node[myvertex,ugentred] (B2) {};
			\draw[myedge,ultra thick,mydarkgray,rounded corners=5pt]
				(-.4,-.4) rectangle (1.4,.4) (1.4,0) -- (B1)
				(-.4,.6) rectangle (1.4,1.4) (1.4,1) -- (B2);
			\draw[myedge] (B1) -- node[left] {$\infty$} (B2);
		\end{tikzpicture}
		\hspace*{1ex} \Rightarrow \hspace*{1ex}
		\begin{tikzpicture}[scale=.85]
			\path (0,0) node[myvertex,ugentblue] (A1) {}
				+(1,0) node[myvertex,ugentblue] (A2) {}
				+(.5,1) node[myvertex,ugentblue] (A5) {};
			\draw[myedge,ultra thick,mydarkgray,rounded corners=5pt] (-.4,-.4) rectangle (1.4,1.4);
			\draw[myedge] (A1) -- node[left] {$\infty$} (A5) -- node[right] {$\infty$} (A2);
		\end{tikzpicture}
    \]
    The skeletal building $\Phi$ of this city product is now obtained by merging the adjacencies ``red'' and ``blue'' together to a new $\ell_1$-adjacency; the adjacency ``green'' is unchanged and is our new $\ell_2$-adjacency.
    By doing so, the residues of type $\{ \text{red}, \text{blue} \}$ in \cref{fig:coxeterrab} have now become panels (of type $\ell_1$), and as a result, $\Phi$ is now a building of rank two, which is no longer a thin building: it is a semiregular tree with parameters $q_1=4$ and $q_2=2$; see \cref{fig:skeletal}.
    (Notice that this is a tree viewed as a \emph{chamber complex}: the vertices of the corresponding tree correspond to the \emph{panels} of this chamber complex.)
\end{example}
\begin{figure}[!ht]
	\centering
	\vspace*{-3ex}
	\begin{tikzpicture}[scale=.9]
		\draw[myedge,ultra thick,mydarkgray,rounded corners=5pt,fill]
			(-.32,-.55) rectangle (1.3,.32);
		\path (0,0) node[myvertex,ugentblue,label=below:$1$] (A1) {}
			+(1,0) node[myvertex,ugentred,label=below:$3$] (A2) {}
			+(.5,1) node[myvertex,ugentgreen,label=right:$2$] (A5) {};
		\draw[myedge] (A1) -- node[left] {$\infty$} (A5) -- node[right] {$\infty$} (A2);
	\end{tikzpicture}
	\hspace{8ex}
	\begin{tikzpicture}[scale=.8]
		\pgfmathsetmacro{\s}{1+sqrt(2)/2}
		\draw[myedge,ultra thick,mydarkgray,rounded corners=5pt,fill]
			(-.7,-.7) rectangle (.7,.7)
			(\s-.7,\s-.7) rectangle (\s+.7,\s+.7)
			(-\s-.7,\s-.7) rectangle (-\s+.7,\s+.7)
			(\s-.7,-\s-.7) rectangle (\s+.7,-\s+.7)
			(-\s-.7,-\s-.7) rectangle (-\s+.7,-\s+.7);
		\foreach\P/\M in {(0,0)/C, (-\s,-\s)/LL, (-\s,\s)/UL, (\s,\s)/UR, (\s,-\s)/LR}
			\foreach\Q/\N in {(-.5,-.5)/LL, (-.5,.5)/UL, (.5,.5)/UR, (.5,-.5)/LR}
				\path \P +\Q node[myvertex] (\M-\N) {};
		\begin{scope}[myedge,ugentgreen,very thick]
			\draw[-dots] (LL-LR) -- ++(285:.4);
				\draw[-dots] (LL-LL) -- ++(225:.4);
				\draw[-dots] (LL-UL) -- ++(165:.4);
			\draw[-dots] (UL-LL) -- ++(195:.4);
				\draw[-dots] (UL-UL) -- ++(135:.4);
				\draw[-dots] (UL-UR) -- ++(75:.4);
			\draw[-dots] (UR-UL) -- ++(105:.4);
				\draw[-dots] (UR-UR) -- ++(45:.4);
				\draw[-dots] (UR-LR) -- ++(345:.4);
			\draw[-dots] (LR-UR) -- ++(15:.4);
				\draw[-dots] (LR-LR) -- ++(315:.4);
				\draw[-dots] (LR-LL) -- ++(255:.4);
			\draw (C-LL) -- (LL-UR) (C-UL) -- (UL-LR) (C-UR) -- (UR-LL) (C-LR) -- (LR-UL);				
		\end{scope}
		\draw[myedge,ugentred]
			\foreach\M in {C, LL, UL, UR, LR} {(\M-LL) -- (\M-LR) (\M-UL) -- (\M-UR)};
		\draw[myedge,ugentblue]
			\foreach\M in {C, LL, UL, UR, LR} {(\M-LL) -- (\M-UL) (\M-LR) -- (\M-UR)};
		\draw[myedge,gray,thin]
			\foreach\M in {C, LL, UL, UR, LR} {(\M-LL) -- (\M-UR) (\M-LR) -- (\M-UL)};
	\end{tikzpicture}
	
\caption{The skeletal building of the right-angled Coxeter complex from \cref{fig:coxeterrab} viewed as city product. The grey blocks are panels of size 4, the green lines are panels of size 2.}
\label{fig:skeletal}
\end{figure}
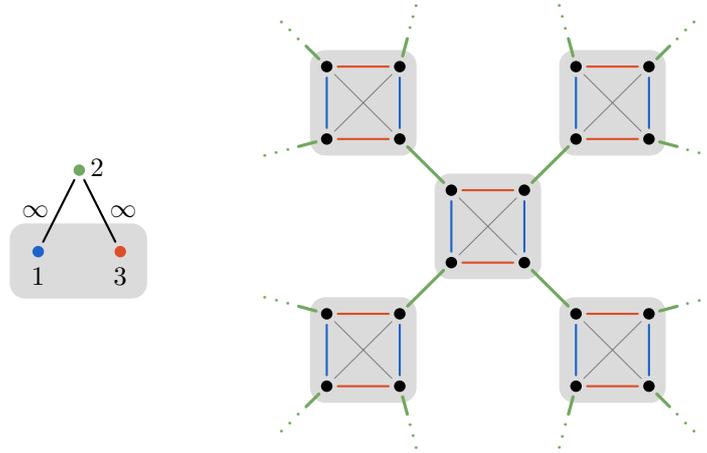

\begin{remark}
    The city product of right-angled diagrams can be interpreted in a purely graph-theoretical way, as follows.
    A subset $X$ of the vertex set of a graph is called a \emph{module} if it has the property that every vertex $v\notin X$ is either adjacent to all vertices in $X$ or adjacent to no vertex in $X$.
    A graph is called \emph{prime} if it has no non-trivial modules.
    
    A right-angled diagram is then a non-trivial city product of lower rank right-angled diagrams if and only if its underlying graph is not prime.
    A decomposition of a right-angled diagram as a non-trivial city product of lower rank right-angled diagrams then corresponds to a \emph{modular partition} of the underlying graph.
\end{remark}


\section{Universal groups of city products}

Earlier in \cref{lem:universalreducible}, we already observed that the universal group construction behaves nicely with respect to disjoint unions of diagrams. This operation on diagrams is a special case of the city product from \cref{sec:cityproduct} (over a diagram with only isolated nodes). In this section, we generalize \cref{lem:universalreducible} to arbitrary city products.
More precisely, we will show that the universal group over a city product of buildings $\cityprod_M(\Delta_1,\dots,\Delta_n)$ is isomorphic to the universal group over the skeletal building of the universal groups over the buildings $\Delta_i$.


\begin{theorem}
	\label{thm:universalcityproduct}
	Let $M$ be a right-angled diagram of rank $n$. For each $\ell \in \{ 1,\dots,n \}$, let $\Delta_\ell$ be a semiregular right-angled building of type $M_\ell$ over $I_\ell$, equipped with a legal coloring $\lambda^\ell$ with color sets $\Omega_i$ (indexed by $i \in I_\ell$).
	Let $\Delta :=\cityprod_M(\Delta_1,\dots,\Delta_n)$ be their city product over $I = \bigsqcup_{\ell=1}^n I_\ell$ and let $\Phi$ be its skeletal building over $\{ 1,\dots,n \}$.
	
	Assume that for each $i \in I$, we have a permutation group $F_i \leq \Sym(\Omega_i)$, giving rise to local data $\F$ over $I$.
	These local data restrict to local data $\F_\ell$ over $I_\ell$ for each $\ell \in \{ 1,\dots,n \}$.
	

    Then we have an isomorphism of topological groups
	\[ \U_\Delta(\F) \cong \U_\Phi\Bigl( \bigl( \U_{\Delta_\ell}(\F_\ell) \bigr)_{\ell \in \{ 1,\dots,n \}} \Bigr) .\]
\end{theorem}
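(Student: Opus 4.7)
The plan is to identify both sides as the same subgroup of the symmetric group on the common chamber set of $\Delta$ and $\Phi$, and then to observe that the permutation topologies on both sides coincide. For the setup, \cref{prop:cityproductskeletalstuff} tells me that the chamber set of $\Phi$ equals that of $\Delta$, the $\ell$-panels of $\Phi$ are precisely the $I_\ell$-residues of $\Delta$, and the projections $\varphi_\ell$ from \cref{lem:cityproductresidue} supply a legal coloring of $\Phi$ with color sets $\Delta_\ell$; on $\Delta$ itself I will use the legal coloring $\lambda'$ from \cref{lem:cityproductcoloring}, which satisfies $\lambda'_i = \lambda^{\ell(i)}_i \circ \varphi_{\ell(i)}$ and so fits the three flavors of local-action calculation into a single commutative framework. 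By \cref{rem:Unotation}, this choice of colorings is inessential.

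\emph{Forward inclusion.} For $g \in \U_\Delta(\F)$, I would first note that $g$ permutes $I_\ell$-residues among themselves and therefore defines an automorphism of $\Phi$. For each $\ell$-panel $\R$ of $\Phi$, the local action
\[ \sigma(g,\R) = \restrict{\varphi_\ell}{g\R} \circ \restrict{g}{\R} \circ \restrict[-1]{\varphi_\ell}{\R} \]
is a composition of three chamber-system isomorphisms of type $M_\ell$ and hence lies in $\Aut(\Delta_\ell)$. For every $i \in I_\ell$ and every $i$-panel $\mathcal{Q}$ of $\Delta_\ell$, the preimage $\P := \restrict[-1]{\varphi_\ell}{\R}(\mathcal{Q})$ is an $i$-panel of $\Delta$ inside $\R$, and a direct diagram identification using $\lambda'_i = \lambda^\ell_i \circ \varphi_\ell$ yields
\[ \sigma_{\lambda^\ell}\bigl(\sigma(g,\R),\mathcal{Q}\bigr) = \sigma_{\lambda'}(g,\P), \]
which lies in $F_i$ since $g \in \U_\Delta(\F)$. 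Thus $\sigma(g,\R) \in \U_{\Delta_\ell}(\F_\ell)$, proving one inclusion.

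\emph{Reverse inclusion and topology.} Conversely, for $g$ in the right-hand side, each $\sigma(g,\R)$ lies in $\U_{\Delta_\ell}(\F_\ell) \leq \Aut(\Delta_\ell)$; inverting the first displayed formula shows that $\restrict{g}{\R}$ is an isomorphism of chamber systems of type $M_\ell$, so $g$ preserves $i$-adjacency for every $i \in I_\ell$. Since $I = \bigsqcup_\ell I_\ell$, this gives $g \in \Aut(\Delta)$, and reading the second displayed identity backwards then yields $\sigma_{\lambda'}(g,\P) \in F_i$ for every $i$-panel $\P$ of $\Delta$, so $g \in \U_\Delta(\F)$. For the topological part, since $\Delta$ and $\Phi$ share a chamber set, the permutation topologies on both sides admit the same identity neighborhood basis of pointwise stabilizers of finite chamber subsets, and the set-theoretic identity is automatically a homeomorphism.

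\emph{Main obstacle.} The one delicate point will be the reverse inclusion: an automorphism of the coarser chamber system $\Phi$ need not a priori respect the finer $I_\ell$-adjacencies of $\Delta$. What rescues the argument is precisely the hypothesis that each local group $\U_{\Delta_\ell}(\F_\ell)$ consists of chamber-system automorphisms of $\Delta_\ell$ rather than arbitrary set-theoretic bijections of its chamber set, which forces every $\restrict{g}{\R}$ to be an isomorphism of $\Delta_\ell$-structures. All remaining steps amount to careful bookkeeping between the colorings $\lambda^\ell$, $\lambda'$, and the $\varphi_\ell$.
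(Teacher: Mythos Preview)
Your proposal is correct and follows essentially the same route as the paper: the same colorings $\lambda'$ and $\varphi_\ell$, the same key identity $\sigma_{\lambda^\ell}\bigl(\sigma(g,\R),\mathcal{Q}\bigr) = \sigma_{\lambda'}(g,\P)$ used in both directions, the same recognition that the only subtle point is showing an element of $\U_\Phi(\cdot)$ preserves the finer $i$-adjacencies of $\Delta$ (resolved exactly as you say, via $\sigma(g,\R)\in\Aut(\Delta_\ell)$), and the same topological conclusion from the shared chamber set.
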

\begin{proof}
	Equip $\Delta$ with the coloring $\lambda'$ from \cref{lem:cityproductcoloring}, assigning colors in the sets $\Omega_i$ (indexed by $i\in I$). Also equip its skeletal building $\Phi$ with the coloring $\varphi$ from \cref{prop:cityproductskeletalstuff}, assigning colors in the sets $\Delta_\ell$ (indexed by $\ell\in\{1,\dots,n\}$).
	Let $\F'$ be the local data over $\{ 1,\dots,n \}$ defined by
	\[ F'_\ell := \U_{\Delta_\ell}(\F_\ell) \quad \text{for each } \ell \in \{ 1,\dots,n \} . \]
	
	First, every automorphism of $\Delta$ induces an automorphism of its skeletal building, hence we have a natural monomorphism
	\[\iota \colon \Aut(\Delta) \hookrightarrow \Aut(\Phi).\]
	Let $g\in\U_\Delta(\F) \leq \Aut(\Delta)$, let $\R$ be any panel of $\Phi$ of type $\ell$, and consider the local action of $\iota(g)$ as an automorphism of $\Phi$ at the panel $\R$. For readability, we will identify $g$ with its image $\iota(g)$. We can also identify $\R$ with a residue of $\Delta$ of type $I_\ell$ (which is isomorphic to $\Delta_\ell$). Then the local action
	\[\sigma_\varphi(g,\R)
		= \restrict{\varphi_\ell}{g\acts\R} \circ \restrict{g}{\R} \circ \restrict[-1]{\varphi_\ell}{\R}\]
	is the composition of three isomorphisms $\Delta_\ell \to \R \to g\acts\R \to \Delta_\ell$ and is hence an automorphism of the building $\Delta_\ell$. It thus makes sense to consider the local action of $\sigma_\varphi(g,\R)$ at an $i$-panel $\P$ of $\Delta_\ell$ with $i\in I_\ell$. Let $\P' = \restrict[-1]{\varphi_\ell}{\R}(\P)$, which is an $i$-panel of $\Delta$ by \cref{lem:cityproductresidue}.
	We then have $\P' \subseteq \R \subseteq \Delta$, and the map $\varphi_\ell \colon \Delta \to \Delta_\ell$ restricts to isomorphisms $\restrict{\varphi_\ell}{\R} \colon \R \to \Delta_\ell$ and $\restrict{\varphi_\ell}{\P'} \colon \P' \to \P$.
	Then
	\begin{align*}
	\label{eq:locallocal}
		\sigma_{\lambda^\ell}(\sigma_\varphi(g,\R),\P)
		& = \restrict{\lambda^\ell_i}{\sigma_\varphi(g,\R)\acts\P} \ \circ\ 
			\restrict{\sigma_\varphi(g,\R)}{\P} \ \circ\ 
			\restrict[-1]{\lambda^\ell_i}{\P}\\
		& = \restrict{\lambda^\ell_i}{\sigma_\varphi(g,\R)\acts\P} \ \circ\ 
			\restrict{\bigl(\restrict{\varphi_\ell}{g\acts\R} \circ \restrict{g}{\R} \circ \restrict[-1]{\varphi_\ell}{\R}\bigr)}{\P} \ \circ\ 
			\restrict[-1]{\lambda^\ell_i}{\P}\\
		& = \restrict{\lambda^\ell_i}{\sigma_\varphi(g,\R)\acts\P} \ \circ\ 
			\restrict{\varphi_\ell}{g\acts\P'} \circ \restrict{g}{\P'} \circ \restrict[-1]{\varphi_\ell}{\P'} \ \circ\ 
			\restrict[-1]{\lambda^\ell_i}{\P}\\
		& = \restrict{\lambda'_i}{g\acts\P'}
			\circ \restrict{g}{\P'}
			\circ \restrict[-1]{\lambda'_i}{\P'}\\
		& = \sigma_{\lambda'}(g,\P').
	\tag{$\ast$}
	\end{align*}
    %
	\[\begin{tikzcd}[dims={9em}{4.2em}]
		\Delta \ar[r,"g"{name=A}] \ar[d,equal] \ar[ddd,"{\lambda'_i}",swap,relay arrow=-10mm]
			& \Delta \ar[d,equal] \ar[ddd,"{\lambda'_i}",relay arrow=10mm]\\
		\Phi \ar[r,"\iota(g)"{name=B},swap] \ar[d,"\varphi_\ell",swap]
			& \Phi \ar[d,"\varphi_\ell"]\\
		\Delta_\ell \ar[r,"{\sigma_\varphi(g,\,{\bullet})}"] \ar[d,"\lambda^\ell_i",swap]
			& \Delta_\ell \ar[d,"\lambda^\ell_i"]\\
		\Omega_i \ar[r,"{\sigma_{\lambda^\ell}(\sigma_\varphi(g,\,{\bullet}),\,{\bullet})}"]
			& \Omega_i
		\arrow[from=A, to=B, Rightarrow, shorten <=12pt, shorten >=10pt, "\strut\iota"]
	\end{tikzcd}\]
	\smallskip
	
	Since $g\in\U_\Delta(\F)$, the result of \cref{eq:locallocal} is a permutation in $F_i$, so we conclude that $\sigma_\varphi(g,\R) \in \U_{\Delta_\ell}(\F_\ell)$. Since this holds for every $\ell$-panel $\R$ of $\Phi$, this shows that $\iota(g) \in \U_\Phi(\boldsymbol{F'})$.
	
	\medskip

	Conversely, let $g\in\U_\Phi(\boldsymbol{F'})$. We can identify $g \in \Aut(\Phi)$ with a permutation of~$\Delta$, and we first verify that this permutation is type-preserving, i.e., that $g$ is in fact an automorphism of $\Delta$. Indeed, let $c\sim_i d$ be $i$-adjacent chambers in $\Delta$. Let $\ell := \ell(i)$ and let $\R$ be the residue of $\Delta$ of type $I_\ell$ containing $c$ and $d$. Then $\R$ is an $\ell$-panel of $\Phi$. The local action $\sigma_\varphi(g,\R)$ is an element of $F'_\ell = \U_{\Delta_\ell}(\F_\ell) \leq \Aut(\Delta_\ell)$. Hence
	\[\restrict{g}{\R} = \restrict[-1]{\varphi_\ell}{g\acts\R} \circ \sigma_\varphi(g,\R) \circ \restrict{\varphi_\ell}{\R}\]
	is a composition of isomorphisms $\R \to \Delta_\ell \to \Delta_\ell \to g\acts\R$, each of which preserves $i$-adjacency. In particular, $g\acts c\sim_i g\acts d$. Since $c$ and $d$ were arbitrary, we conclude that $g\in\Aut(\Delta)$.
	
	Next, let $\P'$ be any $i$-panel in $\Delta$ with $i\in I_\ell$, let $\R$ be the $I_\ell$-residue of $\Delta$ containing $\P'$ and let $\P := \varphi_\ell(\P')$ in $\Delta_\ell$. The reverse calculation of \cref{eq:locallocal} shows that the local action satisfies
	\[\sigma_{\lambda'}(g,\P') = \sigma_{\lambda^\ell}(\sigma_\varphi(g,\R),\P).\]
	Since $\sigma_\varphi(g,\R) \in \U_{\Delta_\ell}(\F_\ell)$, we have $\sigma_{\lambda'}(g,\P') \in (\F_\ell)_i = F_i$. We conclude that indeed $g\in \U_\Delta(\F)$.
	
	\medskip
	
	In conclusion, the restriction of $\iota \colon \Aut(\Delta) \hookrightarrow \Aut(\Phi)$ to $\U_\Delta(\F) \leq \Aut(\Delta)$ is an isomorphism $\kappa \colon \U_\Delta(\F) \to \U_\Phi({\boldsymbol F'})$.
	Finally, notice that it is obvious that $\iota$ is a \emph{homeomorphism} onto its image because $\Delta$ and $\Phi$ have the same underlying set, and the topology on $\Aut(\Delta)$ and $\Aut(\Phi)$ is independent of the additional building structure on $\Delta$ and $\Phi$.
	In particular, $\kappa$ is an isomorphism of topological groups.
\end{proof}

\section{Application: Different right-angled buildings of the same type with isomorphic universal groups}

Inspired by \cite{bessmann}, we provide a construction to produce pairs of right-angled buildings of the same type $M$ over $I$ but with different parameters $(q_i)_{i \in I}$, that nevertheless admit isomorphic universal groups for appropriate choices of the local data $\F$.    

The following result is certainly not the most general result possible, but it seems a good trade-off between producing a large amount of examples and still being ``readable''.

\begin{theorem}\label{thm:application}
    Let $M$ be a right-angled diagram of rank $n$ admitting a non-trivial symmetry $\rho \in \Sym(n)$, and assume that $k \in \{ 1,\dots,n \}$ is not fixed by $\rho$. Let $\Lambda$ be the support of $\rho$ (i.e., the set of elements not fixed by $\rho$).
    For each $\ell \in \{ 1,\dots, n \}$, let $M_\ell$ be a right-angled diagram over $I_\ell$, such that
    \begin{itemize}
        \item $|I_\ell| = 1$ for each $\ell \in \Lambda \setminus \{ k \}$, and
        \item $|I_k| =: t > 1$. Write $I_k = \{ i_1,\dots,i_t \}$.
    \end{itemize}
    Consider the city product $N := \cityprod_M(M_1,\dots,M_n)$, with index set $I = \bigsqcup_{\ell=1}^{n} I_\ell$.
    \begin{itemize}
        \item For each $i \in I_k$, let $G_i$ and $G'_i$ be two arbitrary permutation groups (acting on sets $\Omega_i$ and $\Omega'_i$, respectively).
        \item For each $i \in I_\ell$ for $\ell \not\in \{ k, \rho(k) \}$, let $H_i$ be an arbitrary permutation group (acting on a set $\Omega_i$).
    \end{itemize}
    Finally, consider the two collections $\F$ and $\F'$ of local data over $I$ defined by
    \begin{itemize}
        \item $F_i := G_i$ for $i \in I_k$,
        \item $F_i := \U_{M_k}(G'_{i_1},\dots,G'_{i_t})$ for the unique $i \in I_{\rho(k)}$,
        \item $F_i := H_i$ for all other $i \in I$, \\[-1.8ex]
        \item $F'_i := G'_i$ for $i \in I_k$,
        \item $F'_i := \U_{M_k}(G_{i_1},\dots,G_{i_t})$ for the unique $i \in I_{\rho^{-1}(k)}$,
        \item $F'_i := H_j$ for all $i \in I_\ell$ with $\ell \in \Lambda \setminus \{ k, \rho^{-1}(k) \}$, where $I_{\rho(\ell)} = \{ j \}$,
        \item $F'_i := H_i$ for all other $i \in I$.
    \end{itemize}
    Then the universal groups $\U_N(\F)$ and $\U_N(\F')$ are isomorphic (as topological groups).
\end{theorem}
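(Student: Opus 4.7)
The natural approach is to apply \cref{thm:universalcityproduct} to both $\U_N(\F)$ and $\U_N(\F')$, reducing the statement to an isomorphism between two universal groups on the skeletal building of type $M$, and then exploit the symmetry $\rho$ to conclude.

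More concretely, write $\Delta$ and $\Delta'$ for the semiregular right-angled buildings of type $N$ determined by $\F$ and $\F'$ respectively, and let $\Phi$, $\Phi'$ be their skeletal buildings of type $M$. Applying \cref{thm:universalcityproduct} gives isomorphisms
\[ \U_N(\F) \cong \U_\Phi(\F''), \qquad \U_N(\F') \cong \U_{\Phi'}(\F'''), \]
where $F''_\ell = \U_{\Delta_\ell}(\F_\ell)$ and $F'''_\ell = \U_{\Delta'_\ell}(\F'_\ell)$. The first step is then to unravel these groups using the structural assumptions. Because $M_\ell$ has rank $1$ for every $\ell \in \Lambda \setminus \{k\}$, the universal group $\U_{\Delta_\ell}$ of a rank-$1$ building with a single local group $G$ is simply $G$ acting on the unique panel, so these $F''_\ell$ (resp.\ $F'''_\ell$) collapse to the assigned permutation group. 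At $\ell = k$ we obtain the genuine universal groups $\U_{M_k}(G_{i_1},\dots,G_{i_t})$ and $\U_{M_k}(G'_{i_1},\dots,G'_{i_t})$ respectively.

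The core of the argument is then a case-by-case verification that, for every $\ell \in \{1,\dots,n\}$, the permutation group $F'''_\ell$ (acting on its color set) equals $F''_{\rho(\ell)}$ (acting on its color set). The four relevant cases are: (a) $\ell = k$ and $\ell = \rho^{-1}(k)$, where the two universal groups $\U_{M_k}(G_{i_1},\dots,G_{i_t})$ and $\U_{M_k}(G'_{i_1},\dots,G'_{i_t})$ swap between the two nodes $k$ and $\rho(k)$; (b) $\ell \in \Lambda \setminus \{k, \rho^{-1}(k)\}$, where $F'''_\ell = H_j$ with $\{j\} = I_{\rho(\ell)}$ is precisely the value of $F''_{\rho(\ell)}$; (c) $\ell \notin \Lambda$, where $\rho(\ell) = \ell$ and both local data coincide with $\U_{M_\ell}(H_i : i \in I_\ell)$. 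Simultaneously, one checks that the parameters $q'_\ell = |\Delta'_\ell|$ of $\Phi'$ satisfy $q'_\ell = q_{\rho(\ell)}$ with $q_\ell = |\Delta_\ell|$, which is just the same case check on cardinalities.

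Once this compatibility is established, the conclusion follows from a standard observation: if $\rho$ is a diagram automorphism of $M$ and two buildings of type $M$ have parameters (respectively local data) related by $\rho$, then relabeling one of them by $\rho^{-1}$ produces a building with matching parameters, so by \cref{thm:rabsexist} the two buildings are isomorphic, and the chosen relabeling identifies the local data, yielding a topological isomorphism between the universal groups. Applying this to $\Phi$ and $\Phi'$ gives $\U_\Phi(\F'') \cong \U_{\Phi'}(\F''')$, and combining with the two applications of \cref{thm:universalcityproduct} above yields $\U_N(\F) \cong \U_N(\F')$ as topological groups. The main work is the case-check of Step~2--3; the use of the symmetry in Step~4 is straightforward, but the bookkeeping of color sets and permutation actions (as opposed to merely abstract groups) must be done carefully.
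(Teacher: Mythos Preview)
Your proposal is correct and follows essentially the same approach as the paper: apply \cref{thm:universalcityproduct} to both $\U_N(\F)$ and $\U_N(\F')$, verify by the case distinction you describe that the resulting local data over $\{1,\dots,n\}$ satisfy $L'_\ell = L_{\rho(\ell)}$ for all $\ell$, and then invoke the diagram symmetry $\rho$ of $M$ (together with the uniqueness of semiregular right-angled buildings) to conclude. The paper's proof is simply a more condensed version of exactly this argument.
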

\begin{proof}
    We will use the notation from \itemref{rem:Unotation}{2}.
    Consider the collections $\bL$ and $\bL'$ of local data over $\{ 1,\dots,n \}$ defined by
    \begin{align*}
        L_k &= \U_{M_k}(G_{i_1},\dots,G_{i_t}), \\
        L_{\rho(k)} &= \U_{M_k}(G'_{i_1},\dots,G'_{i_t}), \\
        L_\ell &= H_i \quad \text{for all } \ell \in \Lambda \setminus \{ k, \rho(k) \}, \text{ where } I_\ell = \{ i \} , \\
        L_\ell &= \U_{M_\ell}(H_i \mid i \in I_\ell) \quad \text{for all } \ell \notin \Lambda, \\[1.6ex]
        L'_k &= \U_{M_k}(G'_{i_1},\dots,G'_{i_t}), \\
        L'_{\rho^{-1}(k)} &= \U_{M_k}(G_{i_1},\dots,G_{i_t}), \\
        L'_\ell &= H_j \quad \text{for all } \ell \in \Lambda \setminus \{ k, \rho^{-1}(k) \}, \text{ where } I_{\rho(\ell)} = \{ j \} , \\
        L'_\ell &= \U_{M_\ell}(H_i \mid i \in I_\ell) \quad \text{for all } \ell \notin \Lambda.
    \end{align*}
    By \cref{thm:universalcityproduct}, we then have
    \begin{align*}
        \U_N(\F) &\cong \U_M(\bL) \quad \text{and} \\
        \U_N(\F') &\cong \U_M(\bL').
    \end{align*}
    Since $L'_\ell = L_{\rho(\ell)}$ for all $\ell$, it is now immediately clear that the symmetry $\rho$ of the diagram $M$ ensures that $\U_M(\bL) \cong \U_M(\bL')$, and the result follows.
\end{proof}

\begin{examples}\label{ex:isom}
\begin{enumerate}[label={\rm (\arabic*)}]
    \item\label{ex:isom:1}
        The first set of examples covers precisely the cases that can also be obtained with Lara Be\ss man's method from \cite{bessmann}.
        
        Let $M$ be the diagram of rank $2$ with label $\infty$, let $M_1$ be the diagram of rank $1$ and let $M_2$ be the diagram of rank $t > 1$ without edges.
        The city product $N := \cityprod_M(M_1, M_2)$ of these diagrams has rank $t+1$; we label the $t+1$ nodes of $N$ as below.
        \[
        \begin{tikzpicture}[scale=.81]
			\path (0,0) node[myvertex,ugentblue] (A1) {}
				+(.6,0) node[myvertex,ugentblue] (A2) {}
				+(1.32,0) node[ugentblue] (A3) {$\dots$}
				+(2,0) node[myvertex,ugentblue] (A4) {}
				++(1,1) node[myvertex,ugentblue] (A5) {}
				(3,0) node[myvertex,ugentred] (B1) {}
				++(0,1) node[myvertex,ugentred] (B2) {};
			\draw[myedge,ultra thick,mydarkgray,rounded corners=5pt]
				(-.4,-.4) rectangle (2.4,.4) (2.4,0) -- (B1)
				(-.4,.6) rectangle (2.4,1.4) (2.4,1) -- (B2);
			\draw[myedge] (B1) -- node[left] {$\infty$} (B2);
			\node[] at (-1,1) {$M_1$};
			\node[] at (-1,0) {$M_2$};
		\end{tikzpicture}
		\hspace*{2ex} \Rightarrow \hspace*{2ex}
		\begin{tikzpicture}
			\path (0,0) node[myvertex,ugentblue] (A1) {}
				+(.6,0) node[myvertex,ugentblue] (A2) {}
				+(1.32,0) node[ugentblue] (A3) {$\dots$}
				+(2,0) node[myvertex,ugentblue] (A4) {}
				++(1,1) node[myvertex,ugentblue] (B1) {};
			\draw[myedge,ultra thick,mydarkgray,rounded corners=5pt] (-.4,-.5) rectangle (2.5,1.5);
			\draw[myedge] (A1) -- node[left] {$\infty$} (B1) -- node[right] {$\infty$} (A2);
			\draw[myedge] (A4) -- node[right] {$\infty$} (B1);
			\node[] at ([yshift=.8em] B1) {$t+1$};
			\node[] at ([yshift=-.8em] A1) {$1$};
			\node[] at ([yshift=-.8em] A2) {$2$};
			\node[] at ([yshift=-.8em] A4) {$t$};
		\end{tikzpicture}
        \]
        We can now apply \cref{thm:application}, with $\rho$ the unique non-trivial symmetry of $M$ and with $k=2$, so $\rho(k) = \rho^{-1}(k) = 1$ and $|I_k| = t > 1$.
        For each $i \in \{ 1,\dots, t \}$, let $G_i$ and $G'_i$ be two arbitrary permutation groups (acting on sets $\Omega_i$ and $\Omega'_i$, respectively).
        Notice that there are no values $\ell \notin \{ k, \rho(k) \}$, so we do not have to choose groups $H_i$ as in the theorem.
        Moreover, notice that because $M_2$ has no edges, we simply have $\U_{M_2}(G_1,\dots,G_t) \cong G_1 \times \dots \times G_t$ (see \cref{lem:universalreducible}).
        
        It now follows from \cref{thm:application} that
        \[ \U_N(G_1,\ G_2,\ \dots,\ G_t,\ G'_1 \times \dots \times G'_t) \cong \U_N(G'_1,\ G'_2,\ \dots,\ G'_t,\ G_1 \times \dots \times G_t) . \]
    \item\label{ex:isom:2}
        We now present an example with a non-involutory symmetry.
        Let $M$ be the diagram of rank $3$ with all labels $\infty$, let $M_1$ and $M_3$ be diagrams of rank $1$ and let $M_2$ be the diagram of rank $2$ without edges.
        The city product $N := \cityprod_M(M_1, M_2, M_3)$ of these diagrams has rank $4$, labeled as below.
        \[
        \begin{tikzpicture}[scale=.81]
			\path (.5,-1) node[myvertex,ugentblue] (A1) {}
			    (0,0) node[myvertex,ugentblue] (A2) {}
				+(1,0) node[myvertex,ugentblue] (A3) {}
				++(.5,1) node[myvertex,ugentblue] (A4) {}
				(3,-1) node[myvertex,ugentred] (B1) {}
				+(-.7,1) node[myvertex,ugentred] (B2) {}
				+(0,2) node[myvertex,ugentred] (B3) {};
			\draw[myedge,ultra thick,mydarkgray,rounded corners=5pt]
				(-.4,-.4) rectangle (1.4,.4) (1.4,0) -- (B2)
				(-.4,.6) rectangle (1.4,1.4) (1.4,1) -- (B3)
				(-.4,-1.4) rectangle (1.4,-.6) (1.4,-1) -- (B1);
			\draw[myedge] (B1) -- node[left] {$\infty$} (B2) -- node[left] {$\infty$} (B3) -- node[right] {\!$\infty$} (B1);
			\node[] at (-1,1) {$M_1$};
			\node[] at (-1,0) {$M_2$};			
			\node[] at (-1,-1) {$M_3$};			
		\end{tikzpicture}
		\hspace*{2ex} \Rightarrow \hspace*{2ex}
		\begin{tikzpicture}
			\path (1,0) node[myvertex,ugentblue] (A1) {}
				+(-1,1) node[myvertex,ugentblue] (A2) {}
				+(1,1) node[myvertex,ugentblue] (A3) {}
				+(0,2) node[myvertex,ugentblue] (A4) {};
			\draw[myedge,ultra thick,mydarkgray,rounded corners=5pt] (-.5,-.5) rectangle (2.5,2.5);
			\draw[myedge] (A1) -- node[left] {$\infty$} (A2) -- node[left] {$\infty$} (A4);
			\draw[myedge] (A1) -- node[right] {$\infty$} (A3) -- node[right] {$\infty$} (A4);
			\draw[myedge] (A1) -- node[right] {\!$\infty$} (A4);
			\node[] at ([yshift=-.8em] A1) {$3$};
			\node[] at ([xshift=-.8em] A2) {$1$};
			\node[] at ([xshift=.8em] A3) {$2$};
			\node[] at ([yshift=.8em] A4) {$4$};
		\end{tikzpicture}
        \]
        We now choose $\rho$ to be the cyclic symmetry $(123)$ of $M$ of order $3$ and we choose $k=2$, so $\rho(k)=3$ and $\rho^{-1}(k)=1$.
        We let $G_1,G_2,G'_1,G'_2,H$ be five permutation groups (acting on sets $\Omega_1,\Omega_2,\Omega'_1,\Omega'_2,\Omega$, respectively).
        
        It now follows from \cref{thm:application} that
        \[ \U_N(G_1,\ G_2,\ G'_1 \times G'_2,\ H) \cong \U_N(G'_1,\ G'_2,\ H,\ G_1 \times G_2) . \]        
\end{enumerate}    
\end{examples}

Notice that both examples can exist for \emph{locally finite} buildings (i.e., buildings where each of the parameters $q_i$, $i \in I$ is finite). This happens because the diagram $M_k$ has no edges, so that the corresponding universal group $\U_{M_k}(G_1,\dots,G_t)$ is just a direct product of the groups $G_i$. On the other hand, if the diagram $M_k$ has edges, then the universal group $\U_{M_k}(G_1,\dots,G_t)$ is never finite, so those examples do not occur in the locally finite case.


\clearpage

\nocite{*}
\footnotesize
\bibliographystyle{alpha}
\bibliography{sources}

\begin{thebibliography}{DMdSS18}

\bibitem[BDM21]{bossaert}
Jens Bossaert and Tom De~Medts.
\newblock Topological and algebraic properties of universal groups for
  right-angled buildings.
\newblock {\em Forum Mathematicum}, 33(4):867--888, 2021.

\bibitem[Be{\ss}22]{bessmann}
Lara Be{\ss}mann.
\newblock Is the right-angled building associated to a universal group unique?
\newblock \url{https://arxiv.org/abs/2205.08145}, 2022.

\bibitem[BM00]{burgermozes}
Marc Burger and Shahar Mozes.
\newblock Groups acting on trees: from local to global structure.
\newblock {\em Publications Math\'ematiques de l'IH\'ES}, 92:113--150, 2000.

\bibitem[Cap14]{caprace2014}
Pierre-Emmanuel Caprace.
\newblock Automorphism groups of right-angled buildings: simplicity and local
  splittings.
\newblock {\em Fundamenta Mathematicae}, 224(1):17--51, 2014.

\bibitem[DMdS19]{silva2}
Tom De~Medts and Ana Filipa~Costa da~Silva.
\newblock Open subgroups of the automorphism group of a right-angled building.
\newblock {\em Geometriae Dedicata}, 203:1--23, 2019.

\bibitem[DMdSS18]{silva1}
Tom De~Medts, Ana Filipa~Costa da~Silva, and Koen Struyve.
\newblock Universal groups for right-angled buildings.
\newblock {\em Groups, Geometry and Dynamics}, 12(1):231--287, 2018.

\bibitem[Hos03]{hosaka}
Tetsuya Hosaka.
\newblock Determination up to isomorphism of right-angled {C}oxeter systems.
\newblock {\em Proceedings of the Japan Academy, Series A, Mathematical
  Sciences}, 79(2):33--35, 2003.

\bibitem[HP03]{haglundpaulin}
Fr{\'e}d{\'e}ric Haglund and Fr{\'e}d{\'e}ric Paulin.
\newblock Constructions arborescentes d'immeubles.
\newblock {\em Mathematische Annalen}, 325(1):137--164, Jan 2003.

\bibitem[KT12]{thomas2}
Angela Kubena and Anne Thomas.
\newblock Density of commensurators for uniform lattices of right-angled
  buildings.
\newblock {\em Journal of Group Theory}, 15(5):565--611, 2012.

\bibitem[Rad02]{radcliffe}
David~G. Radcliffe.
\newblock Rigidity of right-angled {C}oxeter groups, 2002.

\bibitem[Ron09]{ronan}
Mark Ronan.
\newblock {\em Lectures on buildings}.
\newblock University of Chicago Press, 2009.

\bibitem[Tho06]{thomas1}
Anne Thomas.
\newblock Lattices acting on right-angled buildings.
\newblock {\em Algebraic \& Geometric Topology}, 6(3):1215--1238, 2006.

\bibitem[TW11]{thomas3}
Anne Thomas and Kevin Wortman.
\newblock Infinite generation of non-cocompact lattices on right-angled
  buildings.
\newblock {\em Algebr. Geom. Topol.}, 11(2):929--938, 2011.

\end{thebibliography}

\bigskip

\end{document}